\numberwithin{equation}{section}
\newcommand{\be}{\begin{equation}}
\newcommand{\ee}{\end{equation}}
\newcommand{\ba}{\begin{array}}
\newcommand{\ea}{\end{array}}
\newcommand{\bea}{\begin{eqnarray}}
\newcommand{\eea}{\end{eqnarray}}
\newcommand{\beaa}{\begin{eqnarray*}}
\newcommand{\eeaa}{\end{eqnarray*}}
\newcommand{\half}{\frac{1}{2}}
\newcommand{\br}{\mathbb{R}}
\newcommand{\ACal}{\mathcal{A}}
\newcommand{\Diag}{\mbox{Diag}}
\newcommand{\etal}{{ et al. }}
\newcommand{\st}{\mbox{ s.t.}}
\newcommand{\mtn}{{m\times n}}
\newcommand{\sgn}{\mbox{sgn}}
\newcommand{\shrink}{\mbox{shrink}}
\newcommand{\Shrink}{\mbox{Shrink}}
\newcommand{\PCal}{\mathcal{P}}
\newtheorem{remark}[theorem]{Remark}
\begin{document}
\title{Accelerated Linearized Bregman Method}
\author{Bo Huang\footnotemark[2] \and Shiqian Ma\footnotemark[1] \footnotemark[2] \and Donald Goldfarb\footnotemark[2]}
\renewcommand{\thefootnote}{\fnsymbol{footnote}}
\footnotetext[1]{Corresponding author.}
\footnotetext[2]{Department of Industrial Engineering and Operations Research,
Columbia University, New York, NY 10027. Email:
\{bh2359,sm2756,goldfarb\}@columbia.edu. Research supported in part by
NSF Grants DMS 06-06712 and DMS 10-16571, ONR Grants N00014-03-0514 and
N00014-08-1-1118, and DOE Grants DE-FG01-92ER-25126 and
DE-FG02-08ER-25856.}

\maketitle
\begin{center} June 21, 2011 \end{center}

\begin{abstract}
In this paper, we propose and analyze an accelerated linearized Bregman (ALB) method for solving the basis pursuit and related sparse optimization problems. This accelerated algorithm is based on the fact that the linearized Bregman (LB) algorithm is equivalent to a gradient descent method applied to a certain dual formulation. We show that the LB method requires $O(1/\epsilon)$ iterations to obtain an $\epsilon$-optimal solution and the ALB algorithm reduces this  iteration complexity to $O(1/\sqrt{\epsilon})$ while requiring almost the same computational effort on each iteration. Numerical results on compressed sensing and matrix completion problems are presented that demonstrate that the ALB method can be significantly faster than the LB method.
\end{abstract}

\begin{keywords} Convex Optimization, Linearized Bregman Method, Accelerated Linearized Bregman Method, Compressed Sensing, Basis Pursuit, Matrix Completion \end{keywords}

\begin{AMS} 68U10, 65K10, 90C25 \end{AMS}

\section{Introduction}
In this paper, we are interested in the following optimization problem
\bea\label{general-convex-min}
\min_{x\in\br^n} J(x) \quad \st \quad  Ax=b,
\eea
where $A\in\br^\mtn$, $b\in\br^m$ and $J(x)$ is a continuous convex function. An important instance of \eqref{general-convex-min} is the so-called basis pursuit problem when $J(x):=\|x\|_1=\sum_{j=1}^n|x_j|$:
\bea\label{basis-pursuit}
\min_{x\in\br^n} \|x\|_1 \quad \st \quad  Ax=b.
\eea
Since the development of the new paradigm of compressed sensing \cite{Candes-Romberg-Tao-2006,Donoho-06}, the basis pursuit problem \eqref{basis-pursuit} has become a topic of great interest.  In compressed sensing, $A$ is usually the product of a sensing matrix $\Phi$ and a transform basis matrix $\Psi$ and $b$ is a vector of the measurements of the signal $s=\Psi x$. The theory of compressed sensing guarantees that the sparsest solution (i.e., representation of the signal $s=\Psi x$ in terms of the basis $\Psi$) of $Ax=b$ can be obtained by solving \eqref{basis-pursuit} under certain conditions on the matrix $\Phi$ and the sparsity of $x$. This means that \eqref{basis-pursuit} gives the optimal solution of the following NP-hard problem \cite{Natarajan-95}:
\bea\label{ell-0-min}
\min_{x\in\br^n} \|x\|_0 \quad \st \quad  Ax=b,
\eea where $\|x\|_0$ counts the number of nonzero elements of $x$.

Matrix generalizations of  \eqref{ell-0-min} and \eqref{basis-pursuit}, respectively, are the so-called matrix rank minimization problem
\bea\label{matrix-rank-min} \min_{X\in\br^\mtn} \rank(X) \quad \st \quad  \ACal(X)= d, \eea
and its convex relaxation, the nuclear norm minimization problem:
\bea\label{nuclear-norm-min} \min_{X\in\br^\mtn} \|X\|_* \quad \st \quad  \ACal(X)= d, \eea
where $\ACal:\br^\mtn\rightarrow\br^p$ is a linear operator, $d\in\br^p$, and $\|X\|_*$ (the nuclear norm of $X$) is defined as the sum of singular values of matrix $X$. A special case of \eqref{matrix-rank-min} is the matrix completion problem:
\bea\label{matrix-rank-min-MC} \min_{X\in\br^\mtn} \rank(X) \quad \st \quad  X_{ij} = M_{ij}, \forall (i,j)\in\Omega, \eea
whose convex relaxation is given by:
\bea\label{nuclear-norm-min-MC} \min_{X\in\br^\mtn} \|X\|_* \quad \st \quad  X_{ij} = M_{ij}, \forall (i,j)\in\Omega. \eea
The matrix completion problem has a lot of interesting applications in online recommendation systems, collaborative filtering \cite{Srebro-thesis-2004,Srebro-Jaakkola-2003}, etc., including the
famous Netflix problem \cite{netflixprize}. It has been proved that under certain conditions, the solutions of the NP-hard problems \eqref{matrix-rank-min} and \eqref{matrix-rank-min-MC} are given respectively by solving their convex relaxations \eqref{nuclear-norm-min} and \eqref{nuclear-norm-min-MC}, with high probability (see, e.g., \cite{Recht-Fazel-Parrilo-2007,Candes-Recht-2008,Candes-Tao-2009,Recht-simpler-MC-2009,Gross-anybasis-MC-2010}).

The linearized Bregman (LB) method was proposed in \cite{YinOsherGoldfarbDarbon2008} to solve the basis pursuit problem \eqref{basis-pursuit}. The method was derived by linearizing the quadratic penalty term in the augmented Lagrangian function that is minimized on each iteration of the so-called Bregman method introduced in \cite{Osher-Burger-Goldfarb-Xu-Yin-05} while adding a prox term to it. The linearized Bregman method was further analyzed in \cite{Cai-Candes-Shen-2008,Cai-Osher-Shen-lbreg1-08, Generalization-LB-Wotao} and applied to solve the matrix completion problem \eqref{nuclear-norm-min-MC} in \cite{Cai-Candes-Shen-2008}.

Throughout of this paper, we will sometimes focus our analysis on the basis pursuit problem \eqref{basis-pursuit}. However, all of the analysis and results can be easily extended to \eqref{nuclear-norm-min} and \eqref{nuclear-norm-min-MC}. The linearized Bregman method depends on a single parameter $\mu>0$ and, as the analysis in \cite{Cai-Candes-Shen-2008,Cai-Osher-Shen-lbreg1-08} shows, actually solves the problem
\bea\label{basis-pursuit-regularized} \min_{x\in\br^n} g_\mu(x):=\|x\|_1+\frac{1}{2\mu}\|x\|_2^2, \quad \st \quad Ax = b, \eea
rather than the problem \eqref{basis-pursuit}. Recently it was shown in \cite{Generalization-LB-Wotao} that the solution to \eqref{basis-pursuit-regularized} is also a solution to problem \eqref{basis-pursuit} as long as $\mu$ is chosen large enough. Furthermore, it was shown in \cite{Generalization-LB-Wotao} that the linearized Bregman method can be viewed as a gradient descent method applied to the Lagrangian dual of problem \eqref{basis-pursuit-regularized}.
This dual problem is an unconstrained optimization problem of the form
\be\label{prob:min-G-mu} \min_{y\in \br^m} \quad G_\mu(y), \ee
where the objective function $G_\mu(y)$ is differentiable since $g_\mu(x)$ is strictly convex (see, e.g., \cite{Rockafellar-book-70}).
Motivated by this result, some techniques for speeding up the classical gradient descent method applied to this dual problem such as taking Barzilai-Borwein (BB) steps \cite{Barzilai-Borwein-88}, and incorporating it into a limited memory BFGS (L-BFGS) method \cite{Liu-Nocedal-89}, were proposed in \cite{Generalization-LB-Wotao}. Numerical results on the basis pursuit problem \eqref{basis-pursuit} reported in \cite{Generalization-LB-Wotao} show that the performance of the linearized Bregman method can be greatly improved by using these techniques.

Our starting point is also motivated by the equivalence between applying the linearized Bregman method to \eqref{basis-pursuit} and solving the Lagrangian dual problem \eqref{prob:min-G-mu} by the gradient descent method. Since the gradient of $G_\mu(y)$ can be shown to be Lipschitz continuous, it is well-known that the classical gradient descent method with a properly chosen step size will obtain an $\epsilon$-optimal solution to \eqref{prob:min-G-mu} (i.e., an approximate solution $y^k$ such that $G_\mu(y^k)-G_\mu(y^*)\leq\epsilon$) in $O(1/\epsilon)$ iterations. In \cite{Nesterov-1983}, Nesterov proposed a technique for accelerating the gradient descent method for solving problem of the form \eqref{prob:min-G-mu} (see, also, \cite{NesterovConvexBook2004}), and proved that using this accelerated method, the number of iterations needed to obtain an $\epsilon$-optimal solution is reduced to $O(1/\sqrt{\epsilon})$ with a negligible change in the work required at each iteration. Nesterov also proved that the $O(1/\sqrt{\epsilon})$ complexity bound is the best bound that one can get if one uses only the first-order information. Based on the above discussion, we propose an accelerated linearized Bregman (ALB) method for solving \eqref{basis-pursuit-regularized} which is equivalent to an accelerated gradient descent method for solving the Lagrangian dual \eqref{prob:min-G-mu} of \eqref{basis-pursuit-regularized}. As a by-product, we show that the basic and the accelerated linearized Bregman methods require $O(1/\epsilon)$ and $O(1/\sqrt{\epsilon})$ iterations, respectively, to obtain an $\epsilon$-optimal solution with respect to the Lagrangian for \eqref{basis-pursuit-regularized}.

The rest of this paper is organized as follows. In Section \ref{sec:LB} we describe the original Bregman iterative method, as well as the linearized Bregman method. We motivate the methods and state some previously obtained theoretical results that establish the equivalence between the LB method and a gradient descent method for the dual of problem \eqref{basis-pursuit-regularized}. We present our accelerated linearized Bregman method in Section \ref{sec:ALB}.  We also provide a theoretical foundation for the accelerated algorithm and prove complexity results for it and the unaccelerated method. In Section \ref{sec:constraint-extension}, we describe how the LB and ALB methods can be extended to basis pursuit problems that include additional convex constraints. In Section \ref{sec:Numerical}, we report preliminary numerical results, on several compressed sensing basis pursuit and matrix completion problems. These numerical results show that our accelerated linearized Bregman method significantly outperforms the basic linearized Bregman method. We make some conclusions in Section \ref{sec:conclusion}.

\section{Bregman and Linearized Bregman Methods}\label{sec:LB}

The Bregman method was introduced to the image processing community by Osher \etal in \cite{Osher-Burger-Goldfarb-Xu-Yin-05} for solving the total-variation (TV) based image restoration problems. The Bregman distance \cite{Bregman-67} with respect to convex function $J(\cdot)$ between points $u$ and $v$ is defined as
\bea\label{Bregman-distance} D_J^p(u,v):=J(u)-J(v)-\langle p,u-v\rangle,\eea where $p\in\partial J(v)$, the subdifferential of $J$ at $v$. The Bregman method for solving \eqref{general-convex-min} is given below as Algorithm \ref{alg:Bregman}. Note that the updating formula for $p^k$ (Step 4 in Algorithm \ref{alg:Bregman}) is based on the optimality conditions of Step 3 in Algorithm \ref{alg:Bregman}:
 \beaa 0 \in \partial J(x^{k+1}) - p^k + A^\top(Ax^{k+1}-b).\eeaa
 This leads to \beaa p^{k+1} = p^k - A^\top(Ax^{k+1}-b). \eeaa It was shown in \cite{Osher-Burger-Goldfarb-Xu-Yin-05,YinOsherGoldfarbDarbon2008} that the Bregman method (Algorithm \ref{alg:Bregman}) converges to a solution of \eqref{general-convex-min} in a finite number of steps.

\begin{algorithm}[h]\caption{Original Bregman Iterative Method}\label{alg:Bregman}
\begin{algorithmic}[1]
\STATE { Input:} $x^0=p^0=0$.
\FOR{$k=0,1,\cdots$}
\STATE  $x^{k+1} = \arg\min_x D_J^{p^k}(x, x^k) + \frac{1}{2}\|Ax-b\|^2$;
\STATE  $p^{k+1} = p^k - A^\top(Ax^{k+1}-b)$;
\ENDFOR
\end{algorithmic}
\end{algorithm}

It is worth noting that for solving \eqref{general-convex-min}, the Bregman method is equivalent to the augmented Lagrangian method \cite{Hestenes-69,Powell-72,Rockafellar1976} in the following sense.

\begin{theorem}\label{the:equivalence-Bregman-AugLag}
The sequences $\{x^k\}$ generated by Algorithm \ref{alg:Bregman} and by the augmented Lagrangian method, which computes for $k=0, 1, \cdots$
\bea\label{augmented-Lagrangian-method}\left\{\ba{lll} x^{k+1} & := & \arg\min_x J(x) - \langle\lambda^k,Ax-b\rangle + \half\|Ax-b\|^2 \\ \lambda^{k+1} & := & \lambda^k - (Ax^{k+1}-b)\ea\right. \eea
starting from $\lambda^0=0$ are exactly the same.
\end{theorem}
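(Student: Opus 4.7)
The plan is to prove equivalence by induction on $k$, establishing the correspondence $p^k = A^\top \lambda^k$ between the two dual-variable sequences. Once this correspondence is maintained, the two $x$-subproblems will be seen to coincide up to an additive constant in the objective, hence produce the same minimizer $x^{k+1}$.

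First I would establish the base case. Both algorithms initialize with $x^0 = 0$, $p^0 = 0$, and $\lambda^0 = 0$, so trivially $p^0 = A^\top \lambda^0$.

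For the inductive step, assume $p^k = A^\top \lambda^k$. I would expand the Bregman subproblem using the definition \eqref{Bregman-distance}:
\beaa
\arg\min_x \bigl\{J(x) - J(x^k) - \langle p^k, x-x^k\rangle + \tfrac{1}{2}\|Ax-b\|^2\bigr\}
= \arg\min_x \bigl\{J(x) - \langle p^k, x\rangle + \tfrac{1}{2}\|Ax-b\|^2\bigr\},
\eeaa
since the terms $J(x^k)$ and $\langle p^k, x^k\rangle$ are constants with respect to $x$. Similarly, the augmented Lagrangian subproblem satisfies
\beaa
\arg\min_x \bigl\{J(x) - \langle \lambda^k, Ax-b\rangle + \tfrac{1}{2}\|Ax-b\|^2\bigr\}
= \arg\min_x \bigl\{J(x) - \langle A^\top\lambda^k, x\rangle + \tfrac{1}{2}\|Ax-b\|^2\bigr\},
\eeaa
where $\langle \lambda^k, b\rangle$ is dropped as a constant. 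Under the induction hypothesis $p^k = A^\top \lambda^k$, these two minimization problems have identical objective functions (up to additive constants), so they share the same minimizer $x^{k+1}$.

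Next I would verify that the dual-variable update preserves the correspondence. Using the Bregman update and the induction hypothesis,
\beaa
p^{k+1} = p^k - A^\top(Ax^{k+1}-b) = A^\top\lambda^k - A^\top(Ax^{k+1}-b) = A^\top\bigl(\lambda^k - (Ax^{k+1}-b)\bigr) = A^\top\lambda^{k+1},
\eeaa
which closes the induction.

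There is no real obstacle here; the proof is essentially a bookkeeping argument. The only subtlety worth flagging is the implicit assumption that the $x^{k+1}$ subproblem admits a (not necessarily unique) minimizer and that one can consistently select the same $x^{k+1}$ in both algorithms — which is unproblematic since the objectives agree exactly — and that $p^k$ remains in $\partial J(x^k)$ at each step, a fact already established in \cite{Osher-Burger-Goldfarb-Xu-Yin-05,YinOsherGoldfarbDarbon2008} and noted in the paragraph preceding the theorem.
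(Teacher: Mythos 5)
Your proof is correct and follows essentially the same route as the paper: both rest on the key correspondence $p^k = A^\top\lambda^k$ (the paper obtains it by telescoping the updates into the sums $p^k=-\sum_{j=1}^k A^\top(Ax^j-b)$ and $\lambda^k=-\sum_{j=1}^k(Ax^j-b)$, while you carry it by explicit induction) and then observe that under this relation the two $x$-subproblems differ only by additive constants. If anything, making the induction explicit is slightly cleaner, since the identity of the iterates $x^j$ in the two sums is exactly what is being proved.
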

\begin{proof}
 From Step 4 of Algorithm \ref{alg:Bregman} and the fact that $p^0=0$, it follows that $p^{k}=-\sum_{j=1}^k A^\top(Ax^j-b)$. From the second equation in \eqref{augmented-Lagrangian-method} and using $\lambda^0=0$, we get $\lambda^k = -\sum_{j=1}^k (Ax^j-b)$. Thus, $p^k=A^\top \lambda^k$ for all $k$. Hence it is easy to see that Step 3 of Algorithm \ref{alg:Bregman} is exactly the same as the first equation in \eqref{augmented-Lagrangian-method} and that the $x^{k+1}$ computed in Algorithm 1 and \eqref{augmented-Lagrangian-method} are exactly the same. Therefore, the sequences $\{x^k\}$ generated by both algorithms are exactly the same.
\end{proof}

Note that for $J(x):=\alpha\|x\|_1$, Step 3 of Algorithm \ref{alg:Bregman} reduces to an $\ell_1$-regularized problem:
\bea\label{step3-Bregman-L1}\min_x \quad \alpha\|x\|_1-\langle p^k,x \rangle + \half\|Ax-b\|^2. \eea Although there are many algorithms for solving the subproblem \eqref{step3-Bregman-L1} such as FPC \cite{Hale-Yin-Zhang-SIAM-2008}, SPGL1 \cite{vandenBerg-Friedlander-2008}, FISTA \cite{Beck-Teboulle-2009} etc., it often takes them many iterations to do so. The linearized Bregman method was proposed in \cite{YinOsherGoldfarbDarbon2008}, and used in \cite{Osher-Mao-Dong-Yin-08,Cai-Osher-Shen-lbreg1-08,Cai-Osher-Shen-lbreg2-09} to overcome this difficulty. The linearized Bregman method replaces the quadratic term $\half\|Ax-b\|^2$ in the objective function that is minimized in Step 3 of Algorithm \ref{alg:Bregman} by its linearization $\langle A^\top(Ax^k-b), x\rangle$ plus a proximal term $\frac{1}{2\mu}\|x-x^k\|^2$. Consequently the updating formula for $p^k$ is changed since the optimality conditions for this minimization step become: \beaa 0 \in\partial J(x^{k+1})-p^k + A^\top(Ax^{k}-b) + \frac{1}{\mu}(x^{k+1}-x^k).\eeaa

In Algorithm \ref{alg:linearized-Bregman} below we present a slightly generalized version of the original linearized Bregman method that includes an additional parameter $\tau$ that corresponds to the length of a gradient step in a dual problem.

\begin{algorithm}[h]\caption{Linearized Bregman Method}\label{alg:linearized-Bregman}
\begin{algorithmic}[1]
\STATE { Input:} $x^0=p^0=0$, $\mu > 0$ and $\tau > 0$.
\FOR{$k=0,1,\cdots$}
\STATE  $x^{k+1} = \arg\min_x D_J^{p^k}(x,x^k) + \tau\langle A^\top(Ax^k-b), x\rangle + \frac{1}{2\mu}\|x-x^k\|^2$;
\STATE  $p^{k+1} = p^k - \tau A^\top(Ax^k-b) - \frac{1}{\mu}(x^{k+1}-x^k)$;
\ENDFOR
\end{algorithmic}
\end{algorithm}
In \cite{Generalization-LB-Wotao}, it is shown that when $\mu \|A\|^2 < 2$, where $\|A\|$ denotes the largest singular value of $A$, the iterates of the linearized Bregman method (Algorithm 2 with $\tau=1$) converge to the solution of the following regularized version of problem \eqref{general-convex-min}:
\begin{eqnarray}\label{smoothed-pursuit}
\min_x \quad J(x) + \frac{1}{2\mu}\|x\|^2 \quad \st \quad Ax=b.
\end{eqnarray}
We prove in Theorem \ref{the:equivalence-linearized-Bregman-gradient} below an analogous result for Algorithm 2 for a range of values of $\tau$. However, we first prove, as in \cite{Generalization-LB-Wotao}, that the linearized Bregman method (Algorithm 2) is equivalent to a gradient descent method
\be\label{gradient-descent-dual-pursuit} y^{k+1} := y^k - \tau \nabla G_{\mu}(y^k)\ee applied to the Lagrangian dual
\begin{equation*}
\max_y\min_w\{J(w)+\frac{1}{2\mu}\|w\|^2-\langle y, Aw-b\rangle \}
\end{equation*}
of \eqref{smoothed-pursuit}, which we express as the following equivalent minimization problem:
\begin{equation}\label{dual-pursuit}\min_y\quad G_\mu(y) := -\{J(w^*)+\frac{1}{2\mu}\|w^*\|^2 - \langle y, Aw^*-b\rangle\},\end{equation}
where \[w^* := \arg\min_w\{J(w)+\frac{1}{2\mu}\|w\|^2 - \langle y, Aw-b\rangle \}.\]
To show that $G_\mu(y)$ is continuously differentiable, we rewrite $G_\mu(y)$ as \[G_\mu(y)=-\Phi_\mu(\mu A^\top y)+\frac{\mu}{2}\|A^\top y\|^2-b^\top y,\] where \[\Phi_\mu(v)\equiv \min_w\{J(w)+\frac{1}{2\mu}\|w-v\|^2\}\] is strictly convex and continuously differentiable with gradient $\nabla\Phi_\mu(v)=\frac{v-\hat{w}}{\mu}$, and $\hat{w}=\arg\min_w\{J(w)+\frac{1}{2\mu}\|w-v\|^2\}$ (e.g., see Proposition 4.1 in \cite{Bertsekas-Tsitsiklis-1989}). From this it follows that $\nabla G_\mu(y)=A w^*-b$. Hence the gradient method \eqref{gradient-descent-dual-pursuit} corresponds to Algorithm \ref{alg:linearized-Bregman-equivalent} below.

\begin{algorithm}[h]\caption{Linearized Bregman Method (Equivalent Form)}\label{alg:linearized-Bregman-equivalent}
\begin{algorithmic}[1]
\STATE { Input:} $\mu > 0$, $\tau > 0$ and $y^0 = \tau b$.
\FOR{$k=0,1,\cdots$}
\STATE  $w^{k+1} := \arg \min_w \{J(w) + \frac{1}{2\mu}\|w\|^2 - \langle y^k, Aw-b \rangle\}$;
\STATE  $y^{k+1} := y^k - \tau(Aw^{k+1}-b).$;
\ENDFOR
\end{algorithmic}
\end{algorithm}

Lemma \ref{lem:equivalence-linearized-Bregman-gradient} and Theorem \ref{the:equivalence-linearized-Bregman-gradient} below generalize Theorem 2.1 in \cite{Generalization-LB-Wotao} by allowing a step length choice in the gradient step \eqref{gradient-descent-dual-pursuit} and show that Algorithms \ref{alg:linearized-Bregman} and \ref{alg:linearized-Bregman-equivalent} are equivalent. Our proof closely follows the proof of Theorem 2.1 in \cite{Generalization-LB-Wotao}.

\begin{lemma}\label{lem:equivalence-linearized-Bregman-gradient}
$x^{k+1}$ computed by Algorithm \ref{alg:linearized-Bregman} equals $w^{k+1}$ computed by Algorithm \ref{alg:linearized-Bregman-equivalent} if and only if
\bea\label{equivalence-lb-gradient-induction}A^\top y^k = p^k-\tau A^\top(Ax^k-b)+\frac{1}{\mu}x^k. \eea
\end{lemma}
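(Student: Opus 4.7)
The plan is to show that both minimization problems reduce, after expansion and collection of terms, to the same canonical form, namely the minimization of $J(\cdot)+\tfrac{1}{2\mu}\|\cdot\|^2$ minus a linear functional; then strict convexity will guarantee that equality of the minimizers is equivalent to equality of the coefficients of the linear functional, which is precisely the condition \eqref{equivalence-lb-gradient-induction}.

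First I would rewrite Step 3 of Algorithm~\ref{alg:linearized-Bregman}. Substituting the definition of the Bregman distance gives
\[
x^{k+1}=\arg\min_x\Big\{J(x)-\langle p^k,x\rangle+\tau\langle A^\top(Ax^k-b),x\rangle+\tfrac{1}{2\mu}\|x-x^k\|^2\Big\},
\]
after dropping terms independent of $x$. Expanding the proximal term as $\tfrac{1}{2\mu}\|x\|^2-\tfrac{1}{\mu}\langle x^k,x\rangle+\text{const}$ and grouping the linear terms in $x$ yields the canonical form
\[
x^{k+1}=\arg\min_x\Big\{J(x)+\tfrac{1}{2\mu}\|x\|^2-\big\langle p^k-\tau A^\top(Ax^k-b)+\tfrac{1}{\mu}x^k,\,x\big\rangle\Big\}.
\]

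Second, I would rewrite Step 3 of Algorithm~\ref{alg:linearized-Bregman-equivalent} in the same canonical form. Using $\langle y^k,Aw-b\rangle=\langle A^\top y^k,w\rangle-\langle y^k,b\rangle$ and dropping the constant,
\[
w^{k+1}=\arg\min_w\Big\{J(w)+\tfrac{1}{2\mu}\|w\|^2-\langle A^\top y^k,\,w\rangle\Big\}.
\]

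Finally, the function $J(\cdot)+\tfrac{1}{2\mu}\|\cdot\|^2$ is strictly convex (this is noted in the excerpt just above Algorithm~\ref{alg:linearized-Bregman-equivalent} and is what makes $G_\mu$ differentiable), so for any linear coefficient $c\in\mathbb{R}^n$ the problem $\min_x J(x)+\tfrac{1}{2\mu}\|x\|^2-\langle c,x\rangle$ has a unique minimizer, and this minimizer depends injectively on $c$ (two distinct linear coefficients give two distinct minimizers, since a minimizer $x^*$ satisfies the optimality condition $c\in\partial J(x^*)+\tfrac{1}{\mu}x^*$, and the map $x^*\mapsto\partial J(x^*)+\tfrac{1}{\mu}x^*$ is strictly monotone). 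Therefore $x^{k+1}=w^{k+1}$ if and only if the two linear coefficients agree, i.e.
\[
A^\top y^k=p^k-\tau A^\top(Ax^k-b)+\tfrac{1}{\mu}x^k,
\]
which is exactly \eqref{equivalence-lb-gradient-induction}.

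The argument is essentially computational and the main (minor) subtlety is the injectivity of the minimizer as a function of the linear coefficient; this can be handled either by the strict monotonicity argument above or, equivalently, by observing that the optimality condition uniquely determines $c$ once $x^*$ is fixed. No deeper ingredients are needed.
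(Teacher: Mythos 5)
Your reduction of both Step 3 subproblems to the canonical form $\min_x\, J(x)+\tfrac{1}{2\mu}\|x\|^2-\langle c,x\rangle$ is exactly the comparison the paper has in mind (its proof simply says the equivalence is obvious upon comparing the two Step 3's), and it correctly gives the direction that is actually used later in Theorem \ref{the:equivalence-linearized-Bregman-gradient}: if \eqref{equivalence-lb-gradient-induction} holds, the two subproblems have identical objectives up to an additive constant, hence the same (unique) minimizer.

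However, your argument for the ``only if'' half is flawed. Strong monotonicity of the operator $T=\partial J+\tfrac{1}{\mu}I$ tells you that $T^{-1}$ is single-valued, i.e.\ each linear coefficient $c$ has a unique minimizer; it does \emph{not} tell you that $T$ itself is single-valued, which is what you would need for ``the optimality condition uniquely determines $c$ once $x^*$ is fixed.'' For nonsmooth $J$ — in particular $J=\|\cdot\|_1$, the main case of the paper — the set $\partial J(x^*)+\tfrac{1}{\mu}x^*$ is not a singleton, so the map $c\mapsto\arg\min_x\{J(x)+\tfrac{1}{2\mu}\|x\|^2-\langle c,x\rangle\}$ is not injective: the minimizer is $\mu\,\shrink(c,1)$, so for instance every $c$ with $\|c\|_\infty\leq 1$ yields the same minimizer $x^*=0$ while the coefficients differ. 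Thus equality of the two minimizers at a given iteration does not by itself force \eqref{equivalence-lb-gradient-induction}; the ``if and only if'' should be read, as the paper implicitly does, as the statement that the two subproblems coincide (identical objectives up to a constant) precisely when \eqref{equivalence-lb-gradient-induction} holds. Since only the ``if'' direction is used in the induction of Theorem \ref{the:equivalence-linearized-Bregman-gradient}, this does not damage the downstream results, but your strict-monotonicity justification of injectivity should be removed or replaced by this weaker, correct reading.
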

\begin{proof}
By comparing Step 3 in Algorithms \ref{alg:linearized-Bregman} and \ref{alg:linearized-Bregman-equivalent}, it is obvious that $w^{k+1}$ is equal to $x^{k+1}$ if and only if \eqref{equivalence-lb-gradient-induction} holds.
\end{proof}
\begin{theorem}\label{the:equivalence-linearized-Bregman-gradient}
The sequences $\{x^k\}$ and $\{w^k\}$ generated by Algorithms \ref{alg:linearized-Bregman} and \ref{alg:linearized-Bregman-equivalent} are the same.
\end{theorem}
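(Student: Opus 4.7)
The plan is to prove Theorem \ref{the:equivalence-linearized-Bregman-gradient} by induction on $k$, using Lemma \ref{lem:equivalence-linearized-Bregman-gradient} as the bridge between the two algorithms. More precisely, I will prove by induction on $k\geq 0$ that
\[
A^\top y^k \;=\; p^k - \tau A^\top(Ax^k-b) + \frac{1}{\mu}x^k,
\]
which, by Lemma \ref{lem:equivalence-linearized-Bregman-gradient}, immediately yields $x^{k+1}=w^{k+1}$ at every step and hence the equality of the two sequences.

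For the base case $k=0$, I would simply plug in the initial values $x^0=p^0=0$ and $y^0=\tau b$ and observe that both sides reduce to $\tau A^\top b$, so the identity holds. In particular Lemma \ref{lem:equivalence-linearized-Bregman-gradient} then gives $x^1=w^1$.

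For the inductive step, assume the identity holds at iteration $k$, so that in particular $x^{k+1}=w^{k+1}$. I would compute $A^\top y^{k+1}$ using the update in Step 4 of Algorithm \ref{alg:linearized-Bregman-equivalent}, replacing $w^{k+1}$ by $x^{k+1}$ courtesy of the induction hypothesis, and obtain
\[
A^\top y^{k+1} \;=\; A^\top y^k \;-\; \tau A^\top(Ax^{k+1}-b).
\]
On the other hand, I would expand the right-hand side at level $k+1$ using the $p$-update in Step 4 of Algorithm \ref{alg:linearized-Bregman}:
\[
p^{k+1} - \tau A^\top(Ax^{k+1}-b) + \tfrac{1}{\mu}x^{k+1}
= p^k - \tau A^\top(Ax^k-b) - \tfrac{1}{\mu}(x^{k+1}-x^k) - \tau A^\top(Ax^{k+1}-b) + \tfrac{1}{\mu}x^{k+1}.
\]
The $\tfrac{1}{\mu}x^{k+1}$ terms cancel, leaving exactly $A^\top y^k - \tau A^\top(Ax^{k+1}-b)$ after applying the inductive hypothesis to the remaining $p^k - \tau A^\top(Ax^k-b)+\tfrac{1}{\mu}x^k$. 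Matching the two expressions closes the induction.

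The argument is essentially algebraic bookkeeping; there is no serious obstacle, and the only subtlety is making sure the induction hypothesis $x^{k+1}=w^{k+1}$ (obtained from the identity at step $k$ via Lemma \ref{lem:equivalence-linearized-Bregman-gradient}) is used at the right moment to justify replacing $Aw^{k+1}$ by $Ax^{k+1}$ when unfolding $y^{k+1}$. Note that the initialization $y^0=\tau b$ in Algorithm \ref{alg:linearized-Bregman-equivalent} is not ad hoc but is precisely what is forced by the base case; this is worth pointing out, since it explains why the equivalent-form algorithm is stated with that particular starting value.
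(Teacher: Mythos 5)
Your proof is correct and takes essentially the same approach as the paper: induction on the invariant $A^\top y^k = p^k - \tau A^\top(Ax^k-b) + \frac{1}{\mu}x^k$, with Lemma \ref{lem:equivalence-linearized-Bregman-gradient} converting it into $x^{k+1}=w^{k+1}$ at each step. The only (cosmetic) difference is that you close the inductive step with a one-step recursion, whereas the paper unrolls Steps 4 of both algorithms into explicit sums over all previous iterates before comparing; both verify the same identity.
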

\begin{proof}
We prove by induction that equation \eqref{equivalence-lb-gradient-induction} holds for all $k\geq 0$. Note that \eqref{equivalence-lb-gradient-induction} holds for $k=0$ since $p^0=x^0=0$ and $y^0=\tau b$. Now let us assume that \eqref{equivalence-lb-gradient-induction} holds for all $0\leq k\leq n-1$; thus by Lemma \ref{lem:equivalence-linearized-Bregman-gradient} $w^{k+1}=x^{k+1}$ for all $0\leq k\leq n-1$. By iterating Step 4 in Algorithm \ref{alg:linearized-Bregman-equivalent} we get \bea\label{iterate-y} y^n = y^{n-1} - \tau(A w^{n}-b) = -\sum_{j=0}^n \tau(A x^j-b).\eea By iterating Step 4 in Algorithm \ref{alg:linearized-Bregman} we get \beaa p^n = -\sum_{j=0}^{n-1}\tau A^\top(Ax^j-b)-\frac{1}{\mu}x^n, \eeaa which implies that
\beaa p^n - \tau A^\top (Ax^n-b) + \frac{1}{\mu} x^n = -\sum_{j=0}^k\tau A^\top(A x^j-b) = A^\top y^n, \eeaa where the last equality follows from \eqref{iterate-y}; thus by induction \eqref{equivalence-lb-gradient-induction} holds for all $k\geq 0$, which implies by Lemma \ref{lem:equivalence-linearized-Bregman-gradient} that $x^k=w^k$ for all $k\geq 0$.
\end{proof}

Before analyzing Algorithms \ref{alg:linearized-Bregman} and \ref{alg:linearized-Bregman-equivalent}, we note that by defining $v^k = A^\top y^k$ and algebraically manipulating the last two terms in the objective function in Step 3 in Algorithm \ref{alg:linearized-Bregman-equivalent}, Steps 3 and 4 in that algorithm can be replaced by
\bea\label{lb-equiv-formula}
\left\{ \ba{lll}
w^{k+1} & := & \arg\min_w J(w) + \frac{1}{2\mu}\|w-\mu v^k\|^2 \\
v^{k+1} & := & v^k - \tau A^\top(Aw^{k+1}-b)
\ea \right. \eea
if we set $v^0 = \tau A^\top b$.
Because Algorithms \ref{alg:linearized-Bregman} and \ref{alg:linearized-Bregman-equivalent} are equivalent, convergence results for the gradient descent method can be applied to both of them. Thus we have the following convergence result.
\begin{theorem}\label{the:convergence-dual-gradient}
Let $J(w) \equiv \|w\|_1$. Then $G_\mu(y)$ in the dual problem \eqref{dual-pursuit} is continuously differentiable and its gradient is Lipschitz continuous with the Lipschitz constant $L \leq \mu\|A\|^2$. Consequently, if the step length $\tau < \frac{2}{\mu\|A\|^2}$, the sequences $\{x^k\}$ and $\{w^k\}$ generated by Algorithms \ref{alg:linearized-Bregman} and \ref{alg:linearized-Bregman-equivalent} converge to the optimal solution of \eqref{smoothed-pursuit}.
\end{theorem}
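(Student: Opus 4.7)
The plan is to prove the three claims (continuous differentiability of $G_\mu$, the Lipschitz bound $L\le\mu\|A\|^2$, and convergence of $\{x^k\},\{w^k\}$) in that order, leveraging the decomposition
\[G_\mu(y)=-\Phi_\mu(\mu A^\top y)+\tfrac{\mu}{2}\|A^\top y\|^2-b^\top y\]
already established in the excerpt. Continuous differentiability is almost immediate: $\Phi_\mu$ is shown in the excerpt to be continuously differentiable (cf. the citation to Bertsekas--Tsitsiklis), and $G_\mu$ is the composition with the linear map $y\mapsto \mu A^\top y$ plus a smooth quadratic and a linear term, so $\nabla G_\mu(y)=Aw^*(y)-b$ where $w^*(y)=\arg\min_w\{\|w\|_1+\frac{1}{2\mu}\|w-\mu A^\top y\|^2\}$.

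For the Lipschitz bound, the key observation specific to $J(w)=\|w\|_1$ is that $w^*(y)$ is given explicitly by componentwise soft-thresholding: $w^*(y)=\shrink(\mu A^\top y,\mu)$, where $\shrink(v,\mu)_i=\sgn(v_i)\max\{|v_i|-\mu,0\}$. I will recall (or verify in one line from the scalar optimality condition) the standard fact that $\shrink(\cdot,\mu)$ is nonexpansive in the Euclidean norm, i.e. $\|\shrink(u,\mu)-\shrink(v,\mu)\|\le\|u-v\|$. Then for any $y_1,y_2\in\br^m$,
\[\|\nabla G_\mu(y_1)-\nabla G_\mu(y_2)\|=\|A(w^*(y_1)-w^*(y_2))\|\le\|A\|\,\|\shrink(\mu A^\top y_1,\mu)-\shrink(\mu A^\top y_2,\mu)\|\le\mu\|A\|^2\|y_1-y_2\|,\]
which gives $L\le\mu\|A\|^2$.

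For convergence, I invoke the classical result for gradient descent on a convex, continuously differentiable function whose gradient is $L$-Lipschitz: for any fixed step size $\tau<2/L$ the function values $G_\mu(y^k)$ decrease and $\nabla G_\mu(y^k)\to 0$, and provided the set of minimizers is nonempty the iterates $\{y^k\}$ themselves converge to some dual optimal $y^\infty$. Existence of a dual optimum follows from strong duality for the strictly convex primal \eqref{smoothed-pursuit} (Slater holds since the constraint is affine and we assume feasibility). Since $\tau<2/(\mu\|A\|^2)\le 2/L$, this applies to the iteration \eqref{gradient-descent-dual-pursuit}. Finally, because $w^*(\cdot)$ is continuous in $y$ (shrinkage is continuous) and equals the unique minimizer of \eqref{smoothed-pursuit} when evaluated at any dual optimum, $w^{k+1}=w^*(y^k)\to w^*(y^\infty)=$ the optimal solution of \eqref{smoothed-pursuit}. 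The equivalence $x^k=w^k$ from Theorem \ref{the:equivalence-linearized-Bregman-gradient} then transfers the conclusion to Algorithm \ref{alg:linearized-Bregman}.

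The main obstacle I anticipate is the iterate-convergence step, since textbook presentations of gradient descent often state only $O(1/k)$ convergence of function values. I would handle this either by citing a standard reference on gradient methods for convex functions with Lipschitz gradient, or by a short self-contained argument using the descent lemma and the fact that $\{\|y^k-y^*\|\}$ is Fejer-monotone with respect to the set of dual optima when $\tau<2/L$, together with $\|\nabla G_\mu(y^k)\|\to 0$ to extract a convergent subsequence whose limit is dual optimal.
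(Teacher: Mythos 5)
Your proposal is correct and follows essentially the same route as the paper: the decomposition of $G_\mu$ already set up before the theorem for differentiability, the identity $\nabla G_\mu(y)=Aw^*(y)-b$ with $w^*(y)$ given by soft-thresholding, the nonexpansiveness of the shrinkage operator to get $L\leq\mu\|A\|^2$, and then convergence of gradient descent with $\tau<2/L$ transferred to Algorithm \ref{alg:linearized-Bregman} via the equivalence theorem. If anything, your Fej\'er-monotonicity argument for convergence of the iterates is more careful than the paper's brief remark that $|1-\tau L|<1$ implies convergence, which glosses over exactly the point you identify as the main obstacle.
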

\begin{proof}
When $J(x) = \|x\|_1$, $w^{k+1}$ in \eqref{lb-equiv-formula} reduces to
\begin{equation*}
w^{k+1} = \mu \cdot \shrink(v^k,1),
\end{equation*}
where the $\ell_1$ shrinkage operator is defined as
\be \label{shrinkage} \shrink(z,\alpha) := \sgn(z)\circ \max\{|z|-\alpha,0\}, \forall z\in\br^n, \alpha>0.\ee
$G_\mu(y)$ is continuously differentiable since $g_\mu(x)$ is strictly convex. Since for any point $y$, $\nabla G_\mu(y) = Aw-b$, where $w=\mu \cdot\shrink(A^\top y, 1)$, it follows from the fact that the shrinkage operator is non-expansive, i.e.,
\begin{equation*}
\|\shrink(s, \alpha)-\shrink (t, \alpha)\| \leq \|s-t\|,\ \forall s, t, \alpha
\end{equation*}
that
\begin{align*}
\|\nabla G_\mu(y^1) - \nabla G_\mu(y^2)\| & = \|\mu\cdot A \cdot \shrink(A^\top y^1,1) - \mu \cdot A\cdot\shrink(A^\top y^2,1)\| \\
                                          & \leq \mu \cdot \|A\| \cdot \|A^\top (y^1-y^2)\| \\
                                          & \leq \mu \|A\|^2 \|y^1-y^2\|,
\end{align*}
for any two points $y^1$ and $y^2$. Thus the Lipschitz constant $L$ of $\nabla G_\mu(\cdot)$ is bounded above by $\mu\|A\|^2$.

When $\tau < \frac{2}{\mu\|A\|^2}$, we have $\tau L < 2 $ and thus $|1-\tau L| < 1$. It then follows that the gradient
descent method $y^{k+1} = y^k - \tau \nabla G_\mu(y^k)$ converges and therefore Algorithms \ref{alg:linearized-Bregman} and \ref{alg:linearized-Bregman-equivalent} converge to $x_\mu^*$, the optimal solution of \eqref{smoothed-pursuit}.
\end{proof}

Before developing an accelerated version of the LB algorithm in the next section. We would like to comment on the similarities and differences between the LB method and Nesterov's composite gradient method \cite{Nesterov-07} and the ISTA method \cite{Beck-Teboulle-2009} applied to problem \eqref{general-convex-min} and related problems. The latter algorithms iterate Step 3 in the LB method (Algorithm \ref{alg:linearized-Bregman}) with $p^k=0$, and never compute or update the subgradient vector $p^k$. More importantly, their methods solve the unconstrained problem
\begin{equation*}
\min_{x\in \br^n} \quad \|x\|_1 + \frac{1}{2\mu}\|Ax-b\|^2.
\end{equation*}
Hence, while these methods and the LB method both linearize the quadratic term $\|Ax-b\|^2$ while handling the nonsmooth term $\|x\|_1$ directly, they are very different.

Similar remarks apply to the accelerated LB method presented in the next section and fast versions of ISTA and Nesterov's composite gradient method.

\section{The Accelerated Linearized Bregman Algorithm}\label{sec:ALB}

Based on Theorem \ref{the:equivalence-linearized-Bregman-gradient}, i.e., the equivalence between the linearized Bregman method and the gradient descent method, we can accelerate the linearized Bregman method by techniques used to accelerate the classical gradient descent method. In \cite{Generalization-LB-Wotao}, Yin considered several techniques such as line search, BB step and L-BFGS, to accelerate the linearized Bregman method. Here we consider the acceleration technique proposed by Nesterov in \cite{Nesterov-1983,NesterovConvexBook2004}. This technique accelerates the classical gradient descent method in the sense that it reduces the iteration complexity significantly without increasing the per-iteration computational effort. For the unconstrained minimization problem \eqref{prob:min-G-mu}, Nesterov's accelerated gradient method replaces the gradient descent method \eqref{gradient-descent-dual-pursuit}
by the following iterative scheme:
\bea\label{unconstrained-min-accelerated-gradient-descent} \left\{\ba{lll} x^{k+1} & := & y^{k} - \tau \nabla G_\mu(y^{k}) \\
                                                                           y^{k+1}   & := & \alpha_k x^{k+1} + (1-\alpha_k) x^{k}, \ea\right. \eea
where the scalars $\alpha_k$ are specially chosen weighting parameters. A typical choice for $\alpha_k$ is $\alpha_k=\frac{3}{k+2}$. If $\tau$ is chosen so that $\tau\leq 1/L$, where $L$ is the Lipschitz constant for $\nabla G_\mu(\cdot)$, Nesterov's accelerated gradient method \eqref{unconstrained-min-accelerated-gradient-descent} obtains an $\epsilon$-optimal solution of \eqref{prob:min-G-mu} in $O(1/\sqrt{\epsilon})$ iterations, while the classical gradient method \eqref{gradient-descent-dual-pursuit} takes $O(1/\epsilon)$ iterations. Moreover, the per-iteration complexities of \eqref{gradient-descent-dual-pursuit} and \eqref{unconstrained-min-accelerated-gradient-descent} are almost the same since computing the gradient $\nabla G_\mu(\cdot)$ usually dominates the computational cost in each iteration.
Nesterov's acceleration technique has been studied and extended by many others for nonsmooth minimization problems and variational inequalities, e.g., see
\cite{Nesterov-2005,Nesterov-07,Beck-Teboulle-2009,Tseng-2008,Nemirovski-Prox-siopt-2005,Goldfarb-Ma-Ksplit,Goldfarb-Ma-Scheinberg-2010,Goldfarb-Scheinberg-fastlinesearch2011}.

Our accelerated linearized Bregman method is given below as Algorithm \ref{alg:ALB}. The main difference between it and the basic linearized Bregman method (Algorithm  \ref{alg:linearized-Bregman}) is that the latter uses the previous iterate $x^k$ and subgradient $p^k$ to compute the new iterate $x^{k+1}$, while Algorithm \ref{alg:ALB} uses extrapolations $\tilde{x}^k$ and $\tilde{p}^k$ that are computed as linear combinations of the two previous iterates and subgradients, respectively. Carefully choosing the sequence of weighting parameters $\{\alpha_k\}$ guarantees an improved rate of convergence.

\begin{algorithm}[h]\caption{Accelerated Linearized Bregman Method}\label{alg:ALB}
\begin{algorithmic}[1]
\STATE { Input:} $x^0=\tilde{x}^0=\tilde{p}^0 = p^0 = 0$, $\mu > 0$, $\tau >0$.
\FOR{$k=0,1,\cdots$}
\STATE  $x^{k+1} = \arg\min_x D_J^{\tilde{p}^k}(x,\tilde{x}^k) + \tau\langle A^\top(A\tilde{x}^k-b), x\rangle + \frac{1}{2\mu}\|x-\tilde{x}^k\|^2$;
\STATE  $p^{k+1} = \tilde{p}^k - \tau A^\top(A\tilde{x}^k-b) - \frac{1}{\mu}(x^{k+1}-\tilde{x}^k)$;
\STATE  $\tilde{x}^{k+1} = \alpha_k x^{k+1} + (1-\alpha_k) x^k$;
\STATE  $\tilde{p}^{k+1} = \alpha_k p^{k+1} + (1-\alpha_k) p^{k}$.
\ENDFOR
\end{algorithmic}
\end{algorithm}

In the following, we first establish the equivalence between the accelerated linearized Bregman method and the corresponding accelerated gradient descent method \eqref{unconstrained-min-accelerated-gradient-descent}, which we give explicitly as \eqref{fast-dual-GD} below applied to the dual problem \eqref{dual-pursuit}. Based on this, we then present complexity results for both basic and accelerated linearized Bregman methods. Not surprisingly, the accelerated linearized Bregman method improves the iteration complexity from $O(1/\epsilon)$ to $O(1/\sqrt{\epsilon})$.

\begin{theorem}\label{the:equiv-ALB-ADG}
The accelerated linearized Bregman method (Algorithm \ref{alg:ALB}) is equivalent to the accelerated dual gradient descent method \eqref{fast-dual-GD} starting from $\tilde{y}^0 = y^{0} = \tau b$:
\bea\label{fast-dual-GD}
\left\{\ba{lll}
w^{k+1}      & := & \arg\min J(w) + \frac{1}{2\mu}\|w\|^2 - \langle \tilde{y}^k, Aw-b\rangle  \\
y^{k+1}  & := & \tilde{y}^k - \tau(Aw^{k+1}-b) \\
\tilde{y}^{k+1}      & := & \alpha_k y^{k+1} + (1-\alpha_k) y^{k}.
\ea \right. \eea
More specifically, the sequence $\{x^k\}$ generated by Algorithm \ref{alg:ALB} is exactly the same as the sequence $\{w^k\}$ generated by \eqref{fast-dual-GD}.
\end{theorem}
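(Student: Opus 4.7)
My plan is to mimic the strategy used to prove Theorem \ref{the:equivalence-linearized-Bregman-gradient}. The first step is to establish an analog of Lemma \ref{lem:equivalence-linearized-Bregman-gradient}: by comparing the quadratic subproblem in Step 3 of Algorithm \ref{alg:ALB} with the first line of \eqref{fast-dual-GD} and expanding the Bregman distance, the two minimizers $x^{k+1}$ and $w^{k+1}$ coincide if and only if their linear terms match, i.e.
\[ A^\top \tilde{y}^k \;=\; \tilde{p}^k - \tau A^\top(A\tilde{x}^k-b) + \tfrac{1}{\mu}\tilde{x}^k. \qquad (\star) \]
Thus the theorem reduces to verifying $(\star)$ for every $k \geq 0$.

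I would then carry this out by a simultaneous induction on two companion identities,
\[ (\mathrm{I}_1)\ A^\top y^k = p^k - \tau A^\top(Ax^k-b)+\tfrac{1}{\mu}x^k, \qquad (\mathrm{I}_2)\ A^\top \tilde{y}^k = \tilde{p}^k - \tau A^\top(A\tilde{x}^k-b)+\tfrac{1}{\mu}\tilde{x}^k. \]
The base case $k=0$ is immediate from $x^0=\tilde{x}^0=p^0=\tilde{p}^0=0$ and $y^0=\tilde{y}^0=\tau b$. For the inductive step, assume both identities hold at index $k$. The lemma-analog then gives $x^{k+1}=w^{k+1}$, so the updates for $y^{k+1}$ and $p^{k+1}$ can be compared directly. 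Rearranging Step 4 of Algorithm \ref{alg:ALB} as $p^{k+1}+\tfrac{1}{\mu}x^{k+1}=\tilde{p}^k-\tau A^\top(A\tilde{x}^k-b)+\tfrac{1}{\mu}\tilde{x}^k$ and invoking $(\mathrm{I}_2)$ at step $k$ shows that this right-hand side equals $A^\top\tilde{y}^k$; combining with $y^{k+1}=\tilde{y}^k-\tau(Ax^{k+1}-b)$ then yields $(\mathrm{I}_1)$ at step $k+1$. Finally, because Steps 5 and 6 of Algorithm \ref{alg:ALB} and the third line of \eqref{fast-dual-GD} are all affine combinations with the same weight $\alpha_k$, taking the $\alpha_k$-combination of $(\mathrm{I}_1)$ at indices $k+1$ and $k$ produces $(\mathrm{I}_2)$ at $k+1$, closing the induction.

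The only real bookkeeping obstacle is confirming that the three extrapolations (for $x$, $p$, and $y$) use the identical weight $\alpha_k$, so that a linear relation in the non-tilde quantities lifts to the analogous one in the tilde quantities. Once this is noted, the argument is a direct generalization of the proof of Theorem \ref{the:equivalence-linearized-Bregman-gradient} and no new Lipschitz or convergence estimate is needed.
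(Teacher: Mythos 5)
Your proposal is correct and takes essentially the same route as the paper: both reduce the equivalence to the matching identity $(\star)$ (the paper's \eqref{equiv-ALB-ADG-induction-eq}) and verify it by induction, exploiting that Steps 5--6 of Algorithm \ref{alg:ALB} and the third line of \eqref{fast-dual-GD} use the same weights $\alpha_k$. The only difference is bookkeeping: carrying the companion non-tilde identity $(\mathrm{I}_1)$ gives you a one-step induction, whereas the paper inducts on the tilde identity alone and therefore treats $k=1$ as an extra base case and reaches back two indices (via $p^{n-1}=A^\top\tilde{y}^{n-2}-\tfrac{1}{\mu}x^{n-1}$); this is a mild streamlining, not a different argument.
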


\begin{proof}
Note that the Step 3 of Algorithm \ref{alg:ALB} is equivalent to
\bea\label{step3-ALB} x^{k+1} := \arg\min J(x) - \langle \tilde{p}^k,x\rangle + \tau\langle A^\top(A\tilde{x}^k-b),x\rangle+\frac{1}{2\mu}\|x-\tilde{x}^k\|^2. \eea
Comparing \eqref{step3-ALB} with the first equation in \eqref{fast-dual-GD}, it is easy to see that $x^{k+1}=w^{k+1}$ if and only if
\bea\label{equiv-ALB-ADG-induction-eq} A^\top \tilde{y}^k = \tilde{p}^k + \tau A^\top(b-A\tilde{x}^k)+\frac{1}{\mu}\tilde{x}^k.\eea
We will prove \eqref{equiv-ALB-ADG-induction-eq} in the following by induction. Note that \eqref{equiv-ALB-ADG-induction-eq} holds for $k=0$ since $\tilde{y}^0=\tau b$ and $\tilde{x}^0=\tilde{p}^0=0$. As a result, we have $x^1=w^1$. By defining $w^0=0$, we also have $x^0=w^0$,
\bea\label{equiv-ALB-ADG-induction-eq-k1-1} A^\top \tilde{y}^1 = A^\top(\alpha_0 y^1 + (1-\alpha_0) A^\top y^0) = \alpha_0 A^\top \tilde{y}^0  + \alpha_0\tau A^\top(b-Aw^1) + (1-\alpha_0) A^\top y^{0}. \eea
On the other hand,
\bea\label{equiv-ALB-ADG-induction-eq-k1-2} p^1 = \tilde{p}^0 + \tau A^\top(b-A\tilde{x}^0)-\frac{1}{\mu}(x^1-\tilde{x}^0) = A^\top \tilde{y}^0 - \frac{1}{\mu}x^1, \eea
where for the second equality we used \eqref{equiv-ALB-ADG-induction-eq} for $k=0$. Expressing $\tilde{p}^1$ and $\tilde{x}^1$ in terms of their affine combinations of $p^1$, $p^0$, $x^1$ and $x^0$, then substituting for $p^1$ using \eqref{equiv-ALB-ADG-induction-eq-k1-2} and using the fact that $x^0=p^0=0$, and finally using $\tilde{y}^0 = \tau b$ and \eqref{equiv-ALB-ADG-induction-eq-k1-1}, we obtain,
\begin{align*}
\tilde{p}^1+\tau A^\top(b-A\tilde{x}^1)+\frac{1}{\mu}\tilde{x}^1 & = \alpha_0 p^1 + (1-\alpha_0) p^0 + \alpha_0\tau A^\top(b-A x^1)+(1-\alpha_0)\tau A^\top(b - Ax^0)+\frac{1}{\mu}(\alpha_0 x^1+(1-\alpha_0)x^0) \\
                                                                 & = \alpha_0 (A^\top \tilde{y}^0-\frac{1}{\mu}x^1) + \alpha_0\tau A^\top(b-A x^1)+(1-\alpha_0)\tau A^\top b+\frac{1}{\mu}\alpha_0 x^1 \\
                                                                 & = \alpha_0 A^\top \tilde{y}^0 + \alpha_0\tau A^\top (b-Ax^1) + (1-\alpha_0)A^\top y^0 \\
                                                                 & = \alpha_0 A^\top \tilde{y}^0 + \alpha_0\tau A^\top (b-Aw^1) + (1-\alpha_0)A^\top y^0 \\
                                                                 & = A^\top \tilde{y}^1.
\end{align*}
Thus we proved that \eqref{equiv-ALB-ADG-induction-eq} holds for $k=1$. Now let us assume that \eqref{equiv-ALB-ADG-induction-eq} holds for $0 \leq k\leq n-1$, which implies $x^k=w^k, \forall 0 \leq k\leq n$ since $x^0=w^0$. We will prove that \eqref{equiv-ALB-ADG-induction-eq} holds for $k=n$.

First, note that
\bea\label{equiv-ALB-ADG-induction-eq-kn-1} p^n = \tilde{p}^{n-1}+\tau A^\top(b-A\tilde{x}^{n-1})-\frac{1}{\mu}(x^n-\tilde{x}^{n-1}) = A^\top \tilde{y}^{n-1} -\frac{1}{\mu}x^n, \eea where the first equality is from Step 4 of Algorithm \ref{alg:ALB} and the second equality is from \eqref{equiv-ALB-ADG-induction-eq} for $k=n-1$. From Step 6 of Algorithm \ref{alg:ALB} and \eqref{equiv-ALB-ADG-induction-eq-kn-1}, we have
\bea\label{equiv-ALB-ADG-induction-eq-kn-2}\ba{lll}\tilde{p}^n & = & \alpha_{n-1}p^n + (1-\alpha_{n-1})p^{n-1} \\
                                                             & = & \alpha_{n-1}(A^\top \tilde{y}^{n-1}-\frac{1}{\mu}x^n) + (1-\alpha_{n-1})(A^\top \tilde{y}^{n-2}-\frac{1}{\mu}x^{n-1}) \\
                                                             & = & \alpha_{n-1}A^\top \tilde{y}^{n-1} + (1-\alpha_{n-1})A^\top \tilde{y}^{n-2} - \frac{1}{\mu} \tilde{x}^n, \ea\eea
where the last equality uses Step 5 of Algorithm \ref{alg:ALB}. On the other hand, from \eqref{fast-dual-GD} we have
\bea\label{equiv-ALB-ADG-induction-eq-kn-3}\ba{lll}A^\top \tilde{y}^n & = & A^\top (\alpha_{n-1}y^{n}+(1-\alpha_{n-1})y^{n-1}) \\
                                                              & = & \alpha_{n-1}A^\top (\tilde{y}^{n-1}+\tau(b-Aw^n)) + (1-\alpha_{n-1})A^\top (\tilde{y}^{n-2}+\tau(b-Aw^{n-1})) \\
                                                              & = & \alpha_{n-1}A^\top \tilde{y}^{n-1} + (1-\alpha_{n-1})A^\top \tilde{y}^{n-2} + \tau A^\top [b-A(\alpha_{n-1}x^n+(1-\alpha_{n-1})x^{n-1})] \\
                                                              & = & \alpha_{n-1}A^\top \tilde{y}^{n-1} + (1-\alpha_{n-1})A^\top \tilde{y}^{n-2} + \tau A^\top(b-A\tilde{x}^n), \ea\eea
where the third equality is from $w^n=x^n$ and $w^{n-1}=x^{n-1}$, the last equality is from Step 5 of Algorithm \ref{alg:ALB}.
Combining \eqref{equiv-ALB-ADG-induction-eq-kn-2} and \eqref{equiv-ALB-ADG-induction-eq-kn-3} we get that \eqref{equiv-ALB-ADG-induction-eq} holds for $k=n$.
\end{proof}

Like the linearized Bregman, we can also use a simpler implementation for accelerated linearized Bregman method in which the main computation at each step is a proximal minimization. Specifically,
\eqref{fast-dual-GD} is equivalent to the following three steps.
\bea \label{alg:nice ALB}
\left\{\ba{lll}
w^{k+1}  & := & \arg\min J(w) + \frac{1}{2\mu}\|w-\mu \tilde{v}^k\|^2  \\
v^{k+1}  & := & \tilde{v}^k - \tau A^\top(Aw^{k+1}-b) \\
\tilde{v}^{k+1}      & := & \alpha_k v^{k+1} + (1-\alpha_k) v^{k}
\ea \right. \eea
As before this follows from letting $v^{k} = A^\top y^{k}$ and $\tilde{v}^k = A^\top \tilde{y}^k$ and completing the square in the objective function in the first equation of \eqref{fast-dual-GD}.

Next we prove iteration complexity bounds for both basic and accelerated linearized Bregman algorithms. Since these algorithms are standard gradient descent methods applied to the Lagrangian dual function and these results have been well established, our proofs will be quite brief.
\begin{theorem}\label{the:complexity-basic-LB}
Let the sequence $\{x^k\}$ be generated by the linearized Bregman method (Algorithm \ref{alg:linearized-Bregman}) and $(x^*, y^*)$ be the pair of optimal primal and dual solutions for Problem \eqref{smoothed-pursuit}. Let $\{y^k\}$ be the sequence generated by Algorithm \ref{alg:linearized-Bregman-equivalent} and suppose the step length $\tau \leq \frac{1}{L}$, where $L$ is the Lipschitz constant for $\nabla G_\mu(y)$. Then for the Lagrangian function
\begin{eqnarray}\label{G-L-1}
\mathcal{L}_{\mu}(x,y) = J(x) + \frac{1}{2\mu}\|x\|^2 - \langle y, Ax-b\rangle,
\end{eqnarray} we have
\begin{eqnarray}\label{complexity-basic-LB-conclusion}
\mathcal{L}_{\mu}(x^*, y^*) - \mathcal{L}_{\mu}(x^{k+1},y^k)   \leq \frac{\|y^*-y^0\|^2}{2\tau k}.
\end{eqnarray}
Thus, if we further have $\tau\geq \beta/L$, where $0< \beta\leq 1$, then $(x^{k+1},y^k)$ is an $\epsilon$-optimal solution to Problem \eqref{smoothed-pursuit} with respect to the Lagrangian function if $k\geq \lceil C/\epsilon \rceil$, where $C:=\frac{L\|y^*-y^0\|^2}{2\beta}$.
\end{theorem}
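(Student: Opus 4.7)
The plan is to leverage the equivalence between Algorithm \ref{alg:linearized-Bregman} and gradient descent on the dual function $G_\mu(y)$ (Theorem \ref{the:equivalence-linearized-Bregman-gradient}), together with the standard $O(1/k)$ complexity bound for gradient descent on a function with Lipschitz continuous gradient.

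First, I would translate the claim into a purely dual statement. By Theorem \ref{the:equivalence-linearized-Bregman-gradient} we have $x^{k+1} = w^{k+1}$, where $w^{k+1}$ is the unique minimizer of $\mathcal{L}_\mu(\cdot, y^k)$ (this uses strict convexity of $J(x)+\frac{1}{2\mu}\|x\|^2$). Hence by definition of $G_\mu$ in \eqref{dual-pursuit},
\[
G_\mu(y^k) = -\min_w \mathcal{L}_\mu(w, y^k) = -\mathcal{L}_\mu(w^{k+1}, y^k) = -\mathcal{L}_\mu(x^{k+1}, y^k).
\]
At the optimum, strong duality for the convex problem \eqref{smoothed-pursuit} gives $G_\mu(y^*) = -\mathcal{L}_\mu(x^*, y^*)$ (since the Lagrangian is minimized at $x^*$ for $y=y^*$). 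Subtracting,
\[
\mathcal{L}_\mu(x^*, y^*) - \mathcal{L}_\mu(x^{k+1}, y^k) = G_\mu(y^k) - G_\mu(y^*).
\]
So the target inequality \eqref{complexity-basic-LB-conclusion} is equivalent to the standard gradient-descent bound $G_\mu(y^k) - G_\mu(y^*) \leq \|y^0-y^*\|^2/(2\tau k)$.

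Next, I would invoke Theorem \ref{the:convergence-dual-gradient}: $\nabla G_\mu$ is Lipschitz with constant $L\leq \mu\|A\|^2$, and Algorithm \ref{alg:linearized-Bregman-equivalent} is the plain gradient iteration $y^{k+1}=y^k - \tau\nabla G_\mu(y^k)$ with $\tau \leq 1/L$. The textbook argument (see, e.g., \cite{NesterovConvexBook2004}) proceeds in two short steps: (i) the descent lemma gives $G_\mu(y^{k+1}) \leq G_\mu(y^k) - \tfrac{\tau}{2}\|\nabla G_\mu(y^k)\|^2$ whenever $\tau\leq 1/L$; (ii) convexity of $G_\mu$ combined with the identity $y^{k+1}-y^k = -\tau\nabla G_\mu(y^k)$ yields the monotone decrease of $\|y^k-y^*\|^2$ and the telescoping estimate $\sum_{j=0}^{k-1}[G_\mu(y^{j+1})-G_\mu(y^*)] \leq \|y^0-y^*\|^2/(2\tau)$. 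Combining with the monotone decrease of $G_\mu(y^k)$ from (i) yields $G_\mu(y^k)-G_\mu(y^*) \leq \|y^0-y^*\|^2/(2\tau k)$, which is exactly \eqref{complexity-basic-LB-conclusion}.

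Finally, the iteration count assertion is immediate: if $\tau \geq \beta/L$, then $\|y^0-y^*\|^2/(2\tau k) \leq L\|y^0-y^*\|^2/(2\beta k) = C/k$, so $k\geq \lceil C/\epsilon\rceil$ forces the Lagrangian gap to be at most $\epsilon$. The only subtlety—and the step I would be most careful about—is justifying strong duality and the identification $-G_\mu(y^*) = \mathcal{L}_\mu(x^*,y^*)$ that turns the dual suboptimality into a Lagrangian suboptimality; everything else is a direct citation of the standard gradient-method analysis.
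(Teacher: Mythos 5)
Your proposal is correct and follows essentially the same route as the paper: identify $G_\mu(y^k)=-\mathcal{L}_\mu(x^{k+1},y^k)$ via \eqref{dual-pursuit}, then apply the standard $O(1/k)$ analysis of gradient descent (descent lemma, convexity, monotonicity of $\{G_\mu(y^k)\}$, telescoping with $y=y^*$) to the dual iteration of Algorithm \ref{alg:linearized-Bregman-equivalent}. The only difference is cosmetic: you split the textbook argument into two steps and explicitly justify $G_\mu(y^*)=-\mathcal{L}_\mu(x^*,y^*)$ via the saddle-point property, a point the paper's proof uses implicitly when passing from the dual gap to \eqref{complexity-basic-LB-conclusion}.
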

\begin{proof}
From \eqref{dual-pursuit} we get
\begin{eqnarray}\label{G-L-2}
G_\mu(y^k) = - \mathcal{L}_{\mu}(x^{k+1},y^k).
\end{eqnarray}

By using the convexity of function $G_\mu(\cdot)$ and the Lipschitz continuity of the gradient $\nabla G_\mu(\cdot)$, we get for any $y$,
\bea \label{complexity-basic-LB-long-eq} \ba{lll}
G_\mu(y^k) - G_\mu(y) & \leq & G_\mu(y^{k-1}) + \langle \nabla G_\mu(y^{k-1}), y^k-y^{k-1}\rangle + \frac{L}{2}\|y^k - y^{k-1}\|^2 - G_\mu(y) \\
                      & \leq & G_\mu(y^{k-1}) + \langle \nabla G_\mu(y^{k-1}), y^k-y^{k-1}\rangle + \frac{1}{2\tau}\|y^k - y^{k-1}\|^2 - G_\mu(y) \\
                      & \leq & \langle \nabla G_\mu(y^{k-1}), y^{k-1}-y \rangle + \langle \nabla G_\mu(y^{k-1}), y^k-y^{k-1}\rangle + \frac{1}{2\tau}\|y^k - y^{k-1}\|^2 \\
                      & = & \langle \nabla G_\mu(y^{k-1}), y^k-y\rangle + \frac{1}{2\tau}\|y^k - y^{k-1}\|^2 \\
                      & = & \frac{1}{\tau}\langle y^{k-1}-y^k, y^{k}-y\rangle + \frac{1}{2\tau}\|y^k - y^{k-1}\|^2 \\
                      & \leq & \frac{1}{2\tau}(\|y-y^{k-1}\|^2 - \|y-y^{k}\|^2).
\ea\eea
Setting $y=y^{k-1}$ in \eqref{complexity-basic-LB-long-eq}, we obtain $G_\mu(y^k)\leq G_\mu(y^{k-1})$ and thus the sequence $\{G_\mu(y^k)\}$ is non-increasing. Moreover, summing \eqref{complexity-basic-LB-long-eq} over $k=1,2,\ldots,n$ with $y=y^*$ yields
\begin{align*}
n(G_\mu(y^n)-G_\mu(y^*)) \leq \sum_{k=1}^n (G_\mu(y^k)-G_\mu(y^*)) \leq \frac{1}{2\tau}(\|y^*-y^0\|^2-\|y^*-y^n\|^2) \leq \frac{1}{2\tau}\|y^*-y^0\|^2,
\end{align*}
and this implies \eqref{complexity-basic-LB-conclusion}.
\end{proof}

Before we analyze the iteration complexity of the accelerated linearized Bregman method, we introduce a lemma from \cite{Tseng-2008} that we will use in our analysis.
\begin{lemma}[Property 1 in \cite{Tseng-2008}]\label{lem:property-1-Tseng}
For any proper lower semicontinuous function $\psi: \br^n \rightarrow (-\infty, +\infty]$ and any $z\in\br^n$, if
\[z_+ = \arg\min_x \{\psi(x)+\half\|x-z\|^2\},\] then \[\psi(x) + \half\|x-z\|^2\geq \psi(z_+)+\half\|z_+-z\|^2+\half\|x-z_+\|^2, \quad \forall x\in\br^n.\]
\end{lemma}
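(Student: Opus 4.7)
The plan is to recognize this inequality as a restatement of the $1$-strong convexity of the auxiliary function $\phi(x) := \psi(x) + \half\|x-z\|^2$. Indeed, assuming the convexity of $\psi$ (which is implicit in the setting where Tseng's property is applied), $\phi$ is strongly convex with modulus $1$ because the quadratic term alone is. Since $z_+$ is its unique minimizer, the generic strong-convexity lower bound $\phi(x) \geq \phi(z_+) + \half\|x - z_+\|^2$ coincides, after writing out the definition of $\phi$ on both sides, with the inequality we want. So conceptually the lemma is simply 1-strong convexity in disguise, and a direct verification is short.

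To make this rigorous I would proceed in three small steps. First, write the first-order optimality condition for the defining minimization: since $z_+$ minimizes $\psi(\cdot) + \half\|\cdot - z\|^2$, the subdifferential sum rule gives $0 \in \partial\psi(z_+) + (z_+ - z)$, i.e., $z - z_+ \in \partial\psi(z_+)$. Second, apply the subgradient inequality for the convex function $\psi$ at $z_+$ with this particular subgradient to obtain
\[\psi(x) \geq \psi(z_+) + \langle z - z_+,\, x - z_+\rangle, \qquad \forall\, x\in\br^n.\]
Third, expand $\half\|x-z\|^2$ about $z_+$ using the standard three-point identity
\[\half\|x-z\|^2 = \half\|x-z_+\|^2 + \half\|z_+-z\|^2 + \langle x-z_+,\, z_+-z\rangle,\]
and add this identity to the subgradient inequality. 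The cross term $\langle x-z_+,\, z_+-z\rangle$ cancels against $\langle z-z_+,\, x-z_+\rangle$, leaving exactly the claimed bound.

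I do not anticipate any real obstacle: the argument is essentially algebraic. The only care needed is in justifying the subdifferential sum rule for $\psi$ added to the smooth quadratic $\half\|\cdot-z\|^2$, which is completely standard in this convex setting. If one wanted to avoid subgradients entirely, one could alternatively invoke the characterization that a $1$-strongly convex function attaining its minimum at $z_+$ satisfies $\phi(x) - \phi(z_+) \geq \half\|x-z_+\|^2$ directly, which bypasses the subdifferential calculus and yields the same conclusion in one line.
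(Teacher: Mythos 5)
Your argument is correct, and it is essentially the standard proof of this fact (the paper itself states the lemma without proof, importing it as Property~1 from Tseng, whose argument is the same strong-convexity/optimality-condition computation you give): the optimality condition $z-z_+\in\partial\psi(z_+)$, the subgradient inequality at $z_+$, and the three-point expansion of $\half\|x-z\|^2$ combine with the cross terms cancelling exactly as you say. The one point worth making explicit is the convexity hypothesis you correctly flag as ``implicit'': as literally stated in the paper, with $\psi$ only proper and lower semicontinuous, the inequality is false (e.g.\ $\psi(x)=-\tfrac14 x^2$ on $[-1,1]$, $+\infty$ outside, $z=0$ gives $z_+=0$ but $\tfrac14 x^2\not\geq\tfrac12 x^2$), and Tseng's Property~1 does assume $\psi$ convex; your proof needs that hypothesis at both the sum rule and the subgradient inequality. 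This is harmless for the paper, since in the complexity proof of Theorem 3.4 the lemma is invoked with $\psi(x)=\tau\, l_{G_\mu}(x;\tilde{y}^k)$, an affine and hence convex function, but your write-up should state convexity as an assumption rather than leave it implicit.
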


The following theorem gives an iteration-complexity result for the accelerated linearized Bregman method. Our proof of this theorem closely follows the proof of Proposition 2 in \cite{Tseng-2008}.
\begin{theorem}\label{complexity-ALB}
Let the sequence $\{x^k\}$ be generated by accelerated linearized Bregman method (Algorithm \ref{alg:ALB}) and $(x^*, y^*)$ be the optimal primal and dual variable for Problem \eqref{smoothed-pursuit}. Let $\{\alpha_k\}$ be chosen as \bea\label{update-alpha}\alpha_{k-1} &=& 1 + \theta_k(\theta_{k-1}^{-1}-1),\eea where
\bea\label{update-t} \theta_{-1} := 1, \mbox{ and } \theta_k = \frac{2}{k+2}, \forall k\geq 0. \eea
Let the sequence $\{y^k\}$ be defined as in \eqref{fast-dual-GD} and the step length $\tau \leq \frac{1}{L}$, where $L$ is the Lipschitz constant of $\nabla G_\mu(y)$ and $G_\mu(\cdot)$ is defined by \eqref{G-L-2}. We have
\begin{eqnarray}\label{complexity-ALB-conclusion-inequa}
G_\mu(y^{k}) - G_\mu(y^*)  \leq \frac{2\|y^*-y^0\|^2}{\tau k^2}.
\end{eqnarray}
Thus, if we further have $\tau\geq \beta/L$, where $0< \beta\leq 1$, then $(x^{k+1},y^k)$ is an $\epsilon$-optimal solution to Problem \eqref{smoothed-pursuit} with respect to the Lagrangian function \eqref{G-L-1} if $k\geq \lceil \sqrt{C/\epsilon} \rceil$, where $C:=\frac{2L\|y^*-y^0\|^2}{\beta}$.
\end{theorem}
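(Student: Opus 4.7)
The plan is to work in the equivalent dual formulation given by \eqref{fast-dual-GD} (via Theorem \ref{the:equiv-ALB-ADG}) and to mimic the standard Nesterov/Tseng-type analysis of an accelerated gradient method applied to the smooth convex function $G_\mu$. First I would introduce an auxiliary ``anchor'' sequence
\[z^k := \theta_{k-1}^{-1}y^k - (\theta_{k-1}^{-1}-1)y^{k-1}\quad\text{for } k\geq 1,\qquad z^0 := y^0,\]
and verify using \eqref{update-alpha} that
\[\tilde y^k = (1-\theta_k)y^k + \theta_k z^k \qquad\text{and}\qquad z^{k+1} = z^k - \frac{\tau}{\theta_k}\nabla G_\mu(\tilde y^k).\]
These two identities are the algebraic bridge that lets one convert the gradient step $y^{k+1}=\tilde y^k-\tau\nabla G_\mu(\tilde y^k)$ from \eqref{fast-dual-GD} into progress measured against the optimum $y^*$.

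Next I would derive the basic one-step inequality. Since $\nabla G_\mu$ is Lipschitz with constant $L\leq 1/\tau$, the descent lemma combined with convexity of $G_\mu$ yields, for every $u\in\br^m$,
\[G_\mu(y^{k+1})-G_\mu(u)\leq \frac{1}{2\tau}\bigl(\|u-\tilde y^k\|^2-\|u-y^{k+1}\|^2\bigr).\]
Applying this with the convex combination $u=(1-\theta_k)y^k+\theta_k y^*$, using convexity once more to bound $G_\mu(u)\leq (1-\theta_k)G_\mu(y^k)+\theta_k G_\mu(y^*)$, and substituting the two $z$-identities to rewrite $u-\tilde y^k=\theta_k(y^*-z^k)$ and $u-y^{k+1}=\theta_k(y^*-z^{k+1})$, one arrives at the Tseng-style recursion
\[\frac{G_\mu(y^{k+1})-G_\mu(y^*)}{\theta_k^2}+\frac{\|y^*-z^{k+1}\|^2}{2\tau}\leq \frac{1-\theta_k}{\theta_k^2}\bigl(G_\mu(y^k)-G_\mu(y^*)\bigr)+\frac{\|y^*-z^k\|^2}{2\tau}.\]

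The main step I expect to require the most care is checking that the schedule $\theta_k=2/(k+2)$ in \eqref{update-t} satisfies the monotonicity condition $(1-\theta_k)/\theta_k^2\leq 1/\theta_{k-1}^2$, which reduces to the elementary inequality $k(k+2)\leq (k+1)^2$. Once this is in hand the recursion telescopes, and since $\theta_0=1$ kills the coefficient of $G_\mu(y^0)-G_\mu(y^*)$ at the base step and $z^0=y^0$, telescoping from index $0$ through $k-1$ yields
\[\frac{G_\mu(y^k)-G_\mu(y^*)}{\theta_{k-1}^2}\leq \frac{\|y^*-y^0\|^2}{2\tau}.\]
Substituting $\theta_{k-1}^2=4/(k+1)^2\leq 4/k^2$ gives exactly \eqref{complexity-ALB-conclusion-inequa}. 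The iteration-complexity statement then follows by forcing the right-hand side below $\epsilon$, using $\tau\geq \beta/L$, and invoking Theorem \ref{the:equiv-ALB-ADG} together with strong duality to translate the dual gap $G_\mu(y^k)-G_\mu(y^*)$ into the Lagrangian gap $\mathcal{L}_\mu(x^*,y^*)-\mathcal{L}_\mu(x^{k+1},y^k)$, exactly as was done in the proof of Theorem \ref{the:complexity-basic-LB}.
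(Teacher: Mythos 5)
Your proposal is correct and follows essentially the same route as the paper: both run the Tseng-style accelerated gradient analysis on the dual, with the same auxiliary sequence $z^k$, the same comparison point $(1-\theta_k)y^k+\theta_k y^*$, the same three-point one-step estimate (the paper obtains it via its Lemma on the prox step applied to the linearization $l_{G_\mu}$, you derive it directly from the descent lemma, convexity, and the explicit gradient step), the same monotonicity check $\frac{1-\theta_k}{\theta_k^2}\leq\frac{1}{\theta_{k-1}^2}$, and the same telescoping to reach \eqref{complexity-ALB-conclusion-inequa}. The only differences are cosmetic (your telescoped constant $\theta_{k-1}^2=4/(k+1)^2$ versus the paper's $\frac{\theta_k^2}{1-\theta_k}$ factor, both dominated by $4/k^2$), so no changes are needed.
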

\begin{proof}
Let
\begin{equation}\label{thm3:1}
z^k = y^{k-1} + \theta_{k-1}^{-1}(y^k-y^{k-1})
\end{equation}
and denote the linearization of $G_\mu(y)$ as
\begin{equation}\label{thm3:2}
l_{G_\mu}(x;y) := G_\mu(y) + \langle \nabla G_\mu(y), x-y\rangle \leq G_\mu(x).
\end{equation}
Therefore the second equality in \eqref{fast-dual-GD} is equivalent to
\begin{eqnarray*}
y^{k+1} &:=& \arg \min_y \quad G_\mu(\tilde{y}^k) + \langle \nabla G_\mu(\tilde{y}^k), y-\tilde{y}^k \rangle + \frac{1}{2\tau}\|y-\tilde{y}^k\|^2 \\
&=& \arg \min_y \quad l_{G_\mu}(y;\tilde{y}^k) + \frac{1}{2\tau}\|y-\tilde{y}^k\|^2.
\end{eqnarray*}
Define $\hat{y}^k :=(1-\theta_k)y^k + \theta_k y^*$, we have
\begin{align}\label{proof-complexity-ALB-long-eq}
G_\mu(y^{k+1}) & \leq G_\mu(\tilde{y}^k) + \langle \nabla G_\mu(\tilde{y}^k), y^{k+1}-\tilde{y}^k \rangle + \frac{L}{2}\|y^{k+1}-\tilde{y}^k\|^2 \\ \nonumber
               & \leq l_{G_\mu}(y^{k+1};\tilde{y}^k) + \frac{1}{2\tau}\|y^{k+1}-\tilde{y}^k\|^2 \\\nonumber
               & \leq l_{G_\mu}(\hat{y}^k;\tilde{y}^k) + \frac{1}{2\tau}\|\hat{y}^k-\tilde{y}^k\|^2 - \frac{1}{2\tau}\|\hat{y}^k-y^{k+1}\|^2 \\\nonumber
               & = l_{G_\mu}((1-\theta_k)y^k + \theta_k y^*; \tilde{y}^k) + \frac{1}{2\tau}\|(1-\theta_k)y^k + \theta_k y^*-\tilde{y}^k\|^2 - \frac{1}{2\tau}\|(1-\theta_k)y^k + \theta_k y^*-y^{k+1}\|^2 \\\nonumber
               & = l_{G_\mu}((1-\theta_k)y^k + \theta_k y^*; \tilde{y}^k) + \frac{\theta_k^2}{2\tau}\|y^*+\theta_k^{-1}(y^k-\tilde{y}^k)-y^k\|^2 - \frac{\theta_k^2}{2\tau}\|y^* + \theta_k^{-1}(y^k-y^{k+1})-y^k\|^2 \\\nonumber
               & = l_{G_\mu}((1-\theta_k)y^k + \theta_k y^*; \tilde{y}^k) + \frac{\theta_k^2}{2\tau}\|y^*-z^k\|^2 - \frac{\theta_k^2}{2\tau}\|y^*-z^{k+1}\|^2 \\\nonumber
               & = (1-\theta_k)l_{G_\mu}(y^k;\tilde{y}^k) + \theta_k l_{G_\mu}(y^*;\tilde{y}^k) + \frac{\theta_k^2}{2\tau}\|y^*-z^k\|^2 - \frac{\theta_k^2}{2\tau}\|y^*-z^{k+1}\|^2 \\\nonumber
               & \leq (1-\theta_k)G_\mu(y^k) + \theta_k G_\mu(y^*) + \frac{\theta_k^2}{2\tau}\|y^*-z^k\|^2 - \frac{\theta_k^2}{2\tau}\|y^*-z^{k+1}\|^2,
\end{align}
where the second inequality is from \eqref{thm3:2} and $\tau\leq 1/L$, the third inequality uses Lemma \ref{lem:property-1-Tseng} with $\psi(x):=\tau l_{G_\mu}(x;\tilde{y}^k)$, the third equality uses \eqref{thm3:1}, \eqref{fast-dual-GD} and \eqref{update-alpha} and the last inequality uses \eqref{thm3:2}.

Therefore we get
\begin{eqnarray*}
\frac{1}{\theta_k^2}(G_\mu(y^{k+1})-G_\mu(y^*)) \leq \frac{1-\theta_k}{\theta_k^2}(G_\mu(y^k)-G_\mu(y^*)) + \frac{1}{2\tau}\|y-z^k\|^2 - \frac{1}{2\tau}\|y-z^{k+1}\|^2.
\end{eqnarray*}
From \eqref{update-t}, it is easy to show that $\frac{1-\theta_k}{\theta_k^2} \leq \frac{1}{\theta_{k-1}^2}$ for all $k\geq 0$. Thus \eqref{proof-complexity-ALB-long-eq} implies that
\begin{equation}\label{proof-complexity-ALB-last-eq}
\frac{1-\theta_{k+1}}{\theta_{k+1}^2}(G_\mu(y^{k+1})-G_\mu(y^*))\leq \frac{1-\theta_k}{\theta_{k}^2}(G_\mu(y^k)-G_\mu(y^*)) + \frac{1}{2\tau}\|y-z^k\|^2 - \frac{1}{2\tau}\|y-z^{k+1}\|^2.
\end{equation}
Summing \eqref{proof-complexity-ALB-last-eq} over $k=0,1,\ldots,n-1$, we get
\begin{equation*}
\frac{1-\theta_n}{\theta_{n}^2}(G_\mu(y^{n})-G_\mu(y^*)) \leq \frac{1}{2\tau}\|y^*-z^0\|^2 = \frac{1}{2\tau}\|y^*-y^0\|^2,
\end{equation*}
which immediately implies \eqref{complexity-ALB-conclusion-inequa}.
\end{proof}

\begin{remark}
The proof technique and the choice of $\theta_k$ used here are suggested in \cite{Tseng-2008} for accelerating the basic algorithm. Other choices of $\theta_k$ can be found in \cite{Nesterov-1983,NesterovConvexBook2004,Beck-Teboulle-2009,Tseng-2008}. They all work here and give the same order of iteration complexity.
\end{remark}

\section{Extension to Problems with Additional Convex Constraints}\label{sec:constraint-extension}
We now consider extensions of both the LB and ALB methods to problems of the form
\begin{eqnarray}\label{problem-convex-constraints}
\min_{x\in X} \quad J(x) \quad \textrm{s.t}\ \ Ax=b,
\end{eqnarray}
where $X$ is a nonempty closed convex set in $\br^n$. It is not clear how to extend the LB and ALB methods (Algorithms \ref{alg:linearized-Bregman} and \ref{alg:ALB}) to problem \eqref{problem-convex-constraints} since we can no longer rely on the relationship
\beaa 0 \in\partial J(x^{k+1})-p^k + A^\top(Ax^{k}-b) + \frac{1}{\mu}(x^{k+1}-x^k)\eeaa
to compute a subgradient $p^{k+1} \in \partial J(x^{k+1})$. Fortunately, the Lagrangian dual gradient versions of these algorithms do not suffer from this difficulty. All that is required to extend them to problem \eqref{problem-convex-constraints} is to include the constraint $w \in X$ in the minimization step in these algorithms. Note that the gradient of
\begin{eqnarray*}
\hat{\Phi}_\mu(v) = \min_{w\in X}\{J(w) + \frac{1}{2\mu}\|w-v\|^2\}
\end{eqnarray*}
remains the same. Also it is clear that the iteration complexity results given in Theorems \ref{the:complexity-basic-LB} and \ref{complexity-ALB} apply to these algorithms as well.

Being able to apply the LB and ALB methods to problems of the form of \eqref{problem-convex-constraints} greatly expands their usefulness. One immediate extension is to compressed sensing problems in which the signal is required to have nonnegative components. Also \eqref{problem-convex-constraints} directly includes all linear programs. Applying the LB and ALB to such problems, with the goal of only obtaining approximated optimal solutions, will be the subject of a future paper.

\section{Numerical Experiments}\label{sec:Numerical}
In this section, we report some numerical results that demonstrate the effectiveness of the accelerated linearized Bregman algorithm. All numerical experiments were run in MATLAB 7.3.0 on a Dell Precision 670 workstation with an Intel Xeon(TM) 3.4GHZ CPU and 6GB of RAM.

\subsection{Numerical Results on Compressed Sensing Problems}\label{sec:numerical-cs}
In this subsection, we compare the performance of the accelerated linearized Bregman method against the performance of the basic linearized Bregman method on a variety of compressed sensing problems of the form \eqref{basis-pursuit}.

We use three types of sensing matrices $A\in\br^\mtn$. Type (i): $A$ is a standard Gaussian matrix generated by the $randn(m,n)$ function in MATLAB. Type (ii): $A$ is first generated as a standard Gaussian matrix and then normalized to have unit-norm columns. Type (iii): The elements of $A$ are sampled from a Bernoulli distribution as either $+1$ or $-1$. We use two types of sparse solutions $x^*\in\br^n$ with sparsity $s$ (i.e., the number of nonzeros in $x^*$). The positions of the nonzero entries of $x^*$ are selected uniformly at random, and each nonzero value is sampled either from (i) standard Gaussian (the $randn$ function in MATLAB) or from (ii) $[-1,1]$ uniformly at random ($2*rand-1$ in MATLAB).

For compressed sensing problems, where $J(x)=\|x\|_1$, the linearized Bregman method reduces to the two-line algorithm:
\beaa\left\{\ba{lll}
x^{k+1} & := & \mu\cdot\shrink(v^k,1) \\
v^{k+1} & := & v^k + \tau A^\top(b-Ax^{k+1}),
\ea\right.\eeaa
where the $\ell_1$ shrinkage operator is defined in \eqref{shrinkage}.
Similarly, the accelerated linearized Bregman can be written as:
\beaa\left\{\ba{lll}
x^{k+1} & := & \mu\cdot\shrink(\tilde{v}^k,1) \\
v^{k+1}  & := & \tilde{v}^k + \tau A^T(b-Ax^{k+1}) \\
\tilde{v}^{k+1}      & := & \alpha_k v^{k+1} + (1-\alpha_k) v^{k}.
\ea\right.\eeaa Both algorithms are very simple to program and involve only one $Ax$ and one $A^\top y$ matrix-vector multiplication in each iteration.

We ran both LB and ALB with the $seed$ used for generating random number in MATLAB setting as $0$. Here we set $n=2000, m=0.4\times n, s = 0.2\times m, \mu = 5$ for all data sets. We set $\tau = \frac{2}{\mu \|A\|^2}$. We terminated the algorithms when the stopping criterion \begin{equation}\label{stop-crit} \|Ax^k-b\|/\|b\| < 10^{-5}\end{equation} was satisfied or the number of iterations exceeded 5000. Note that \eqref{stop-crit} was also used in \cite{Generalization-LB-Wotao}. We report the results in Table \ref{tab:CS-LB-ALB}.

\begin{table}[ht]\label{tab:CS-LB-ALB}
\begin{center}{ \caption{Compare linearized Bregman (LB) with accelerated linearized Bregman (ALB)}
\begin{tabular}{c|c|c|c|c|c}\hline
\multicolumn{2}{c|}{Standard Gaussian matrix $A$} & \multicolumn{2}{c|}{ Number of Iterations } & \multicolumn{2}{c}{ Relative error $\|x-x^*\|/\|x^*\|$}
\\\hline

Type of $x^*$  & $n (m=0.4n, s=0.2m)$ & LB & ALB & LB & ALB
\\\hline

Gaussian & 2000 & 5000+ & 330 & 5.1715e-3 & 1.4646e-5
\\\hline

Uniform & 2000 & 1681 & 214 & 2.2042e-5 & 1.5241e-5
 \\\hline

\multicolumn{2}{c|}{Normalized Gaussian matrix $A$} & \multicolumn{2}{c|}{ Number of Iterations } & \multicolumn{2}{c}{ Relative error $\|x-x^*\|/\|x^*\|$}
\\\hline
Type of $x^*$  & $n (m=0.4n, s=0.2m)$ & LB & ALB & LB & ALB
\\\hline
Gaussian & 2000 & 2625 & 234 & 3.2366e-5  & 1.2664e-5 \\\hline
Uniform & 2000 & 5000+ & 292 & 1.2621e-2 & 1.5629e-5  \\\hline

\multicolumn{2}{c|}{Bernoulli +1/-1 matrix $A$} & \multicolumn{2}{c|}{ Number of Iterations } & \multicolumn{2}{c}{ Relative error $\|x-x^*\|/\|x^*\|$}
\\\hline
Type of $x^*$  & $n (m=0.4n, s=0.2m)$ & LB & ALB & LB & ALB
\\\hline
Gaussian & 2000 & 2314 & 222  & 4.2057e-5  & 1.0812e-5  \\\hline
Uniform  & 2000 & 5000+ & 304 & 1.6141e-2 & 1.5732e-5 \\\hline
\end{tabular}}
\end{center}
\end{table}

In Table \ref{tab:CS-LB-ALB}, we see that for three out of six problems, LB did not achieve the desired convergence criterion within 5000 iterations, while ALB satisfied this stopping criterion in less than 330 iterations on all six problems. To further demonstrate the significant improvement the ALB achieved over LB, we plot in Figures \ref{fig:LB-ALB1}, \ref{fig:LB-ALB2} and \ref{fig:LB-ALB3} the Euclidean norms of the residuals and the relative errors as a function of the iteration number that were obtained by LB and ALB applied to the same data sets. These figures also depict the non-monotonic behavior of the ALB method.

\begin{center}
\begin{figure}
  \hspace*{-0.1in}\includegraphics[width=0.5\textwidth]{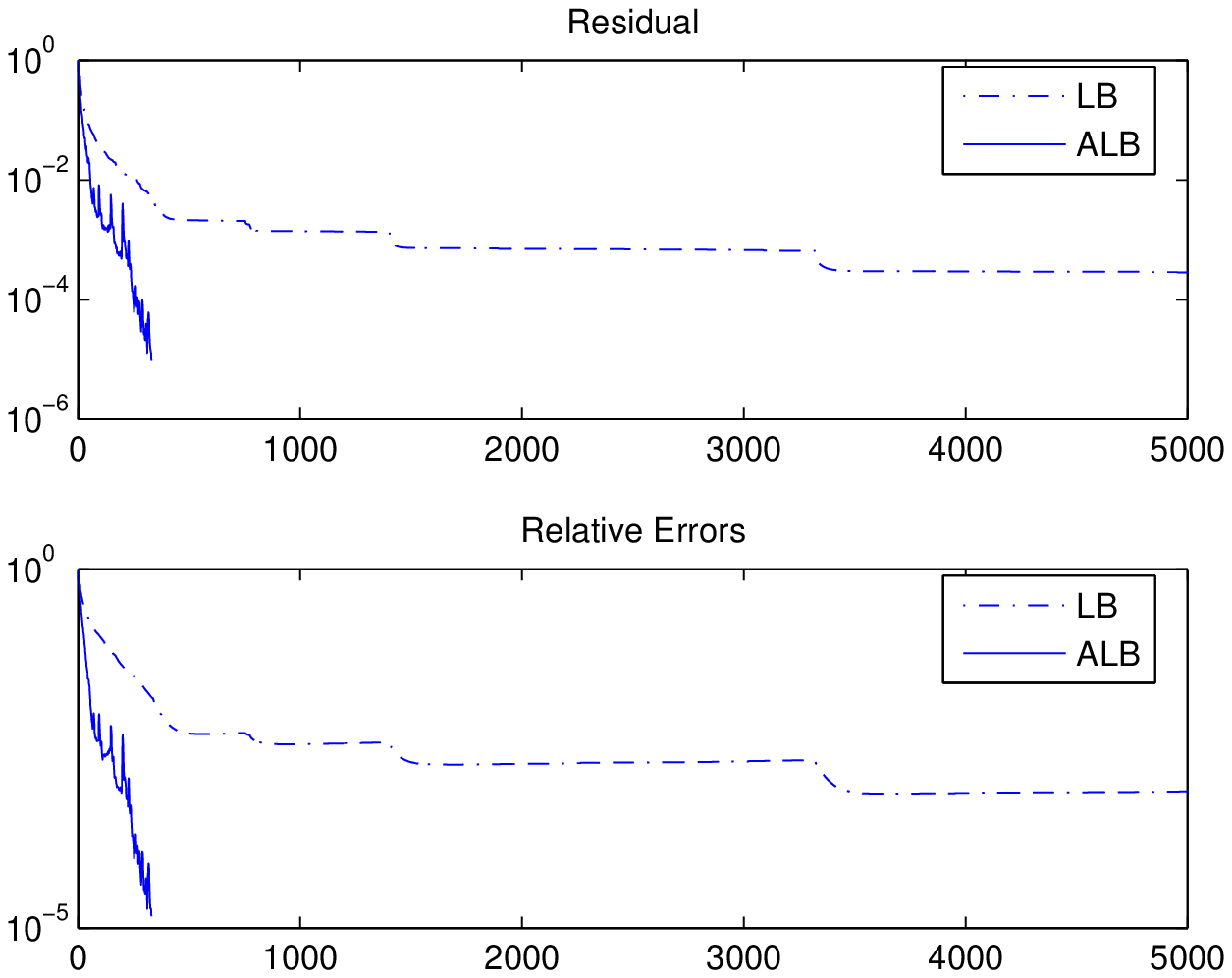}
  \hspace*{-0.1in}\includegraphics[width=0.5\textwidth]{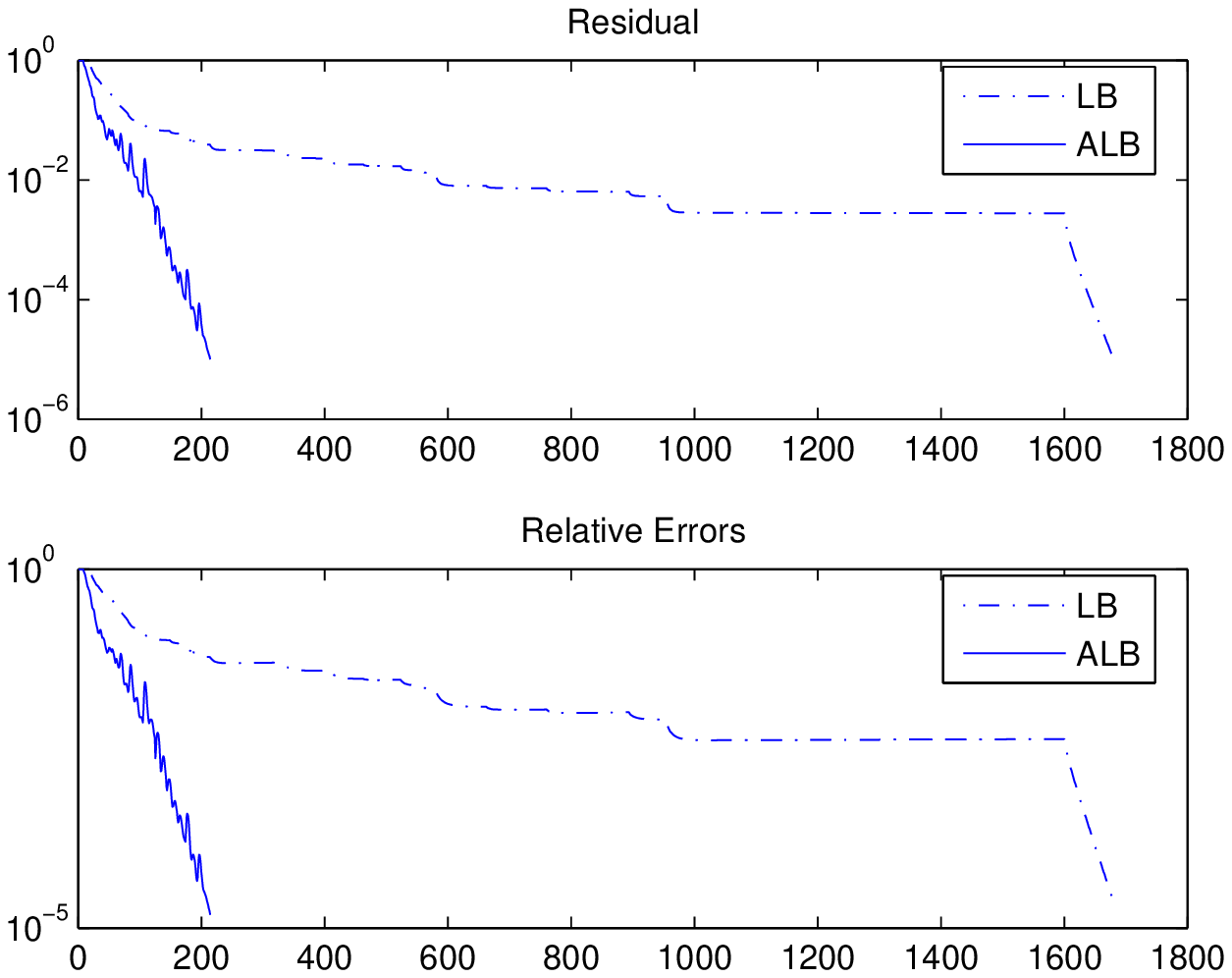}
  \caption{Gaussian matrix $A$, Left: Gaussian $x^*$, Right: Uniform $x^*$}\label{fig:LB-ALB1}
  \hspace*{-0.1in}\includegraphics[width=0.5\textwidth]{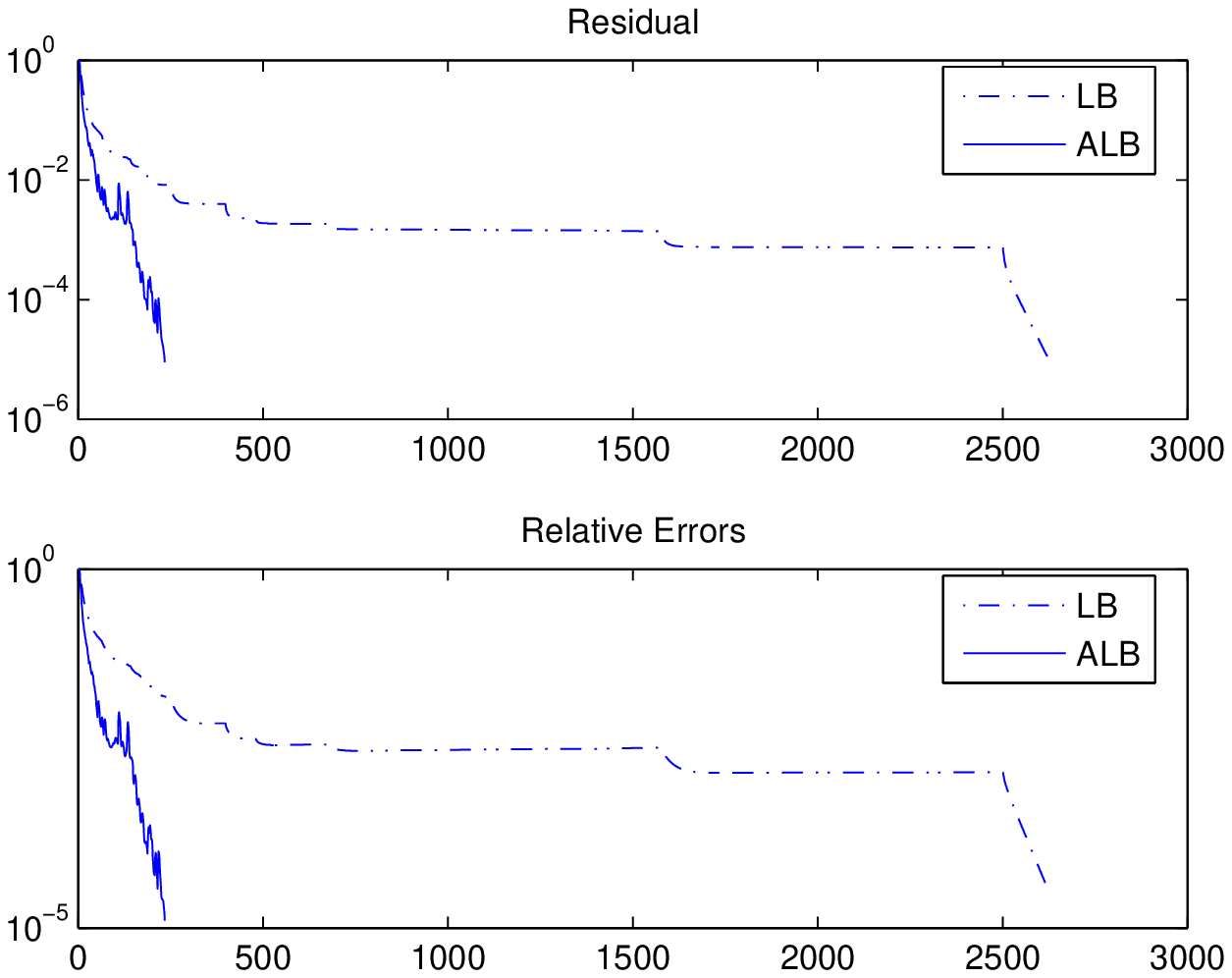}
  \hspace*{-0.1in}\includegraphics[width=0.5\textwidth]{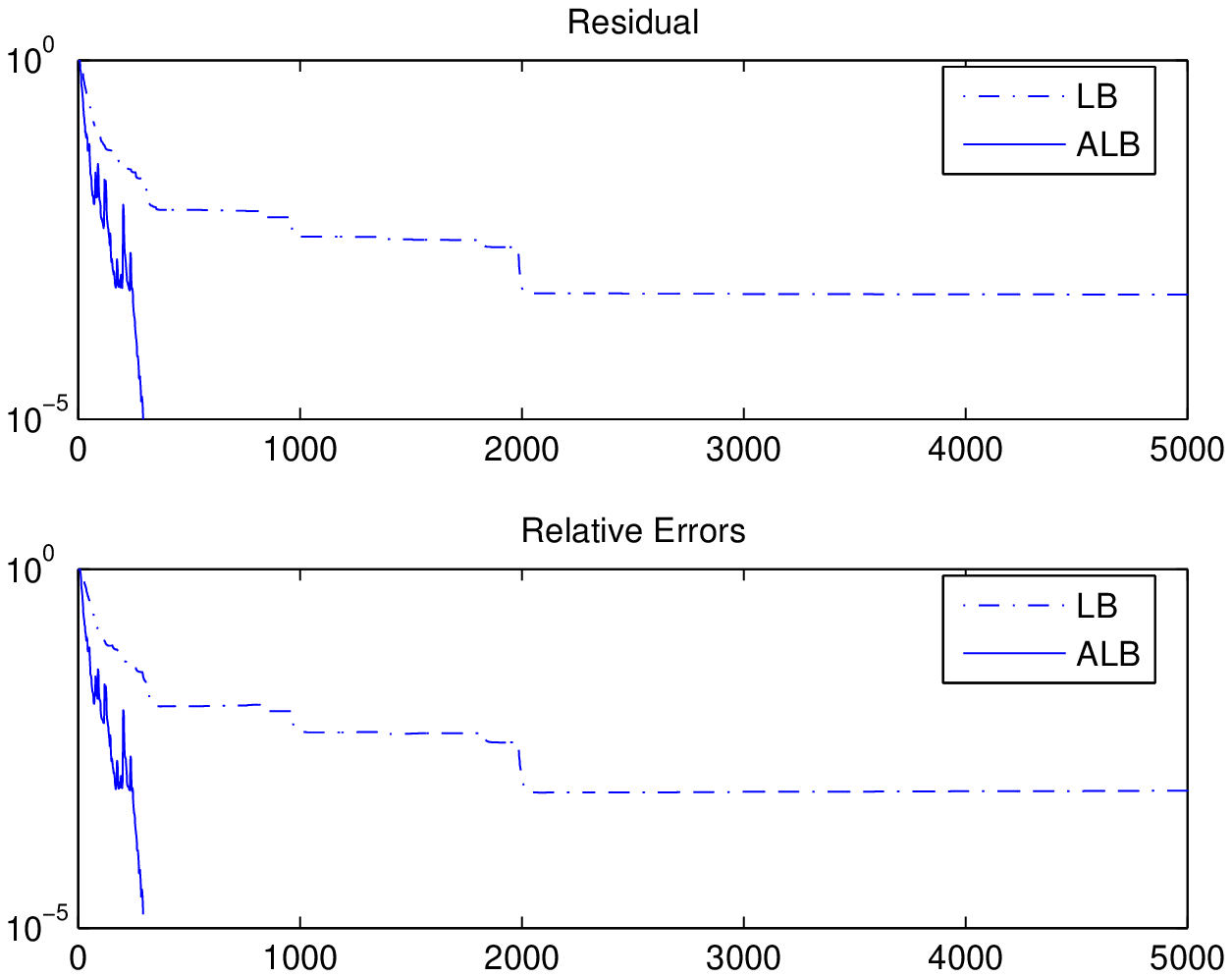}
  \caption{Normalized Gaussian matrix $A$, Left: Gaussian $x^*$, Right: Uniform $x^*$}\label{fig:LB-ALB2}
  \hspace*{-0.1in}\includegraphics[width=0.5\textwidth]{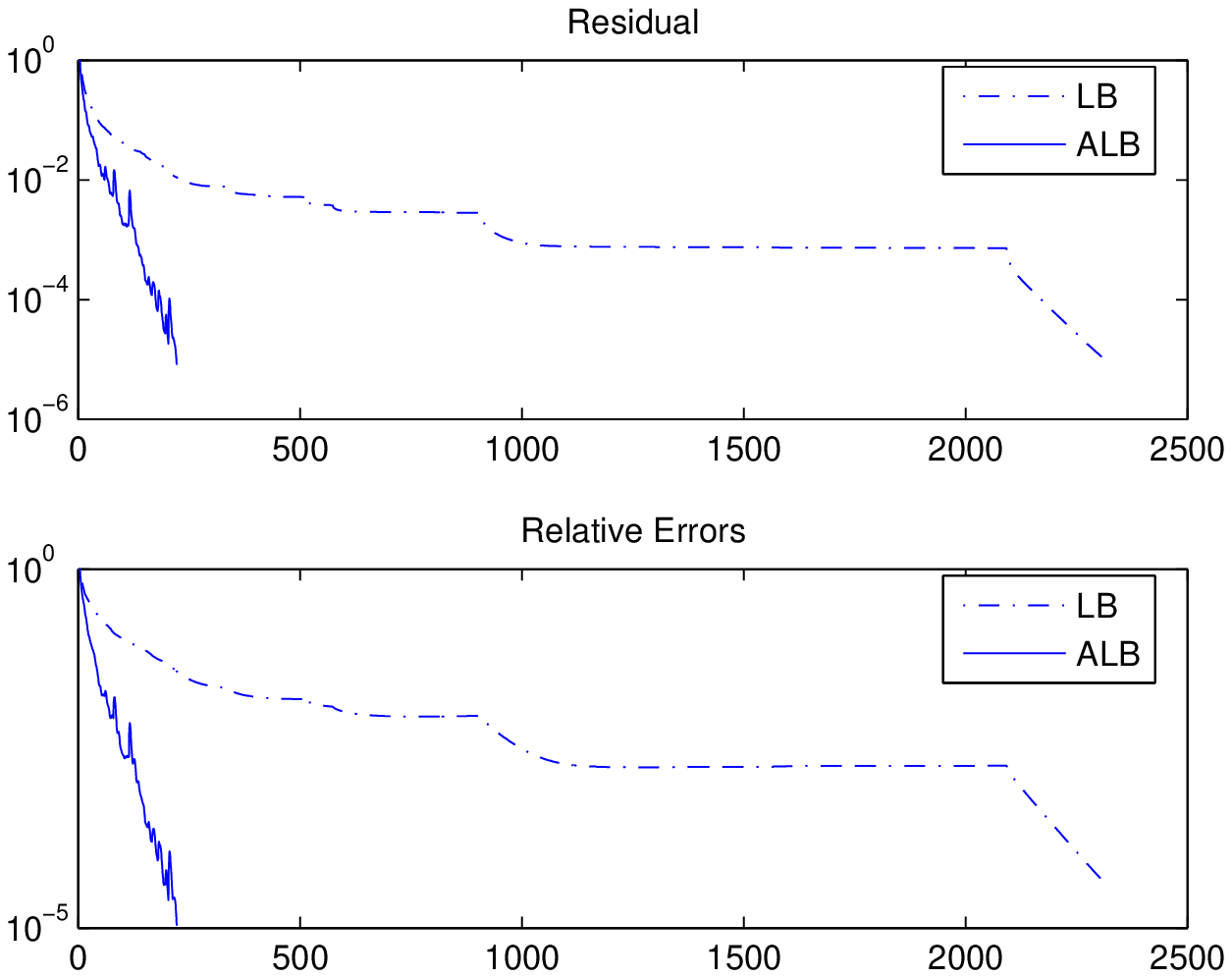}
  \hspace*{-0.1in}\includegraphics[width=0.5\textwidth]{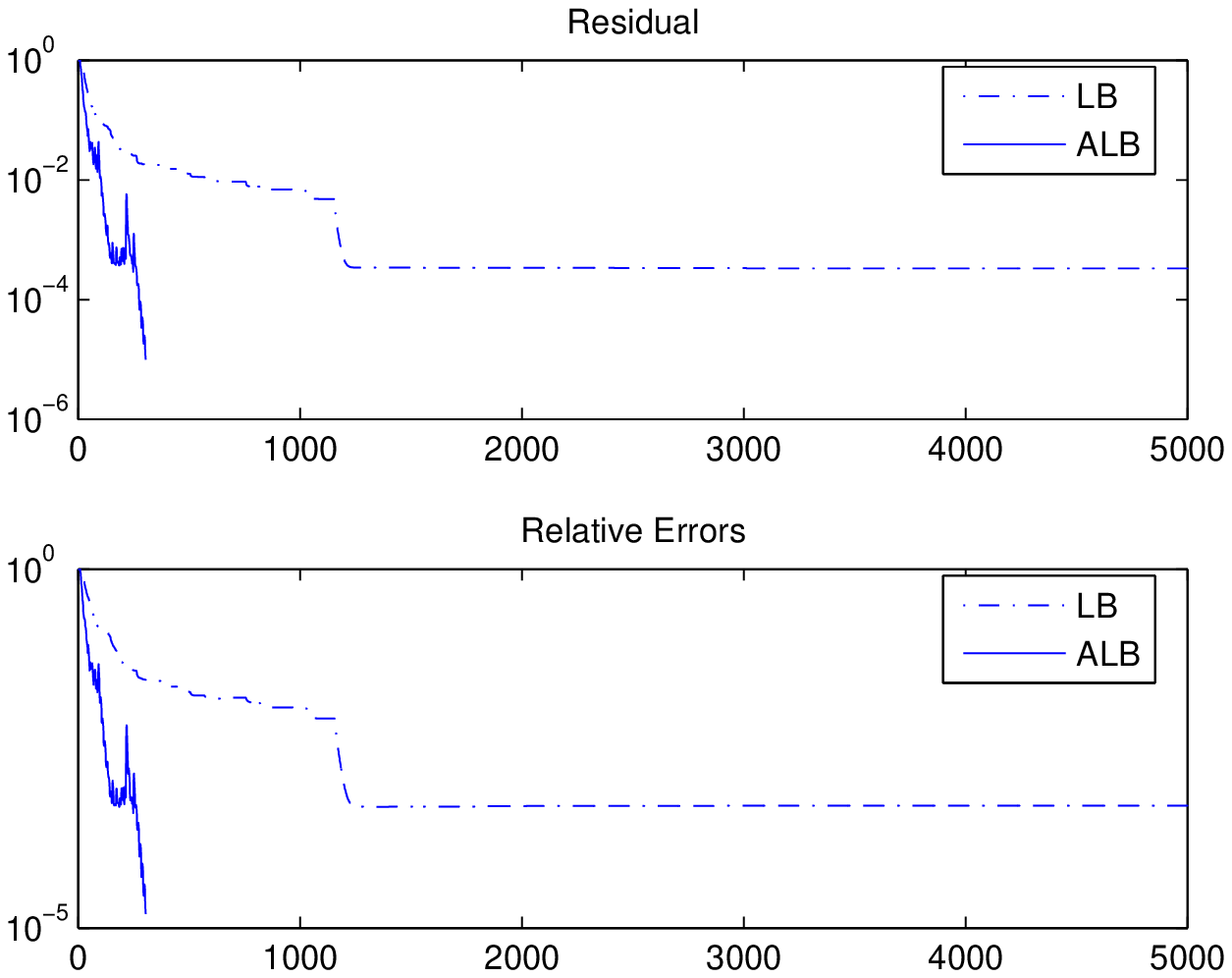}
  \caption{Bernoulli matrix $A$, Left: Gaussian $x^*$, Right: Uniform $x^*$}\label{fig:LB-ALB3}
\end{figure}
\end{center}

\subsection{Numerical Results on Matrix Completion Problems}\label{sec:numerical-mc}
There are fast implementations of linearized Bregman \cite{Cai-Candes-Shen-2008} and other solvers \cite{Ma-Goldfarb-Chen-2008,Toh-Yun-2009,Liu-Sun-Toh-2009,Wen-Yin-Zhang-MC-2010} for solving matrix completion problems. We do not compare the linearized Bregman and our accelerated linearized Bregman algorithms with these fast solvers here. Rather our tests are focused only on comparing ALB with LB and verifying that the acceleration actually occurs in practice for matrix completion problems.

The nuclear norm matrix completion problem \eqref{nuclear-norm-min-MC} can be rewritten as
\bea\label{nuclear-norm-min-MC-rewrite-oprator} \min_X \quad \|X\|_* \quad \st \quad \PCal_\Omega(X) = \PCal_\Omega(M), \eea where $[\PCal_\Omega(X)]_{ij}=X_{ij}$ if $(i,j)\in\Omega$ and $[\PCal_\Omega(X)]_{ij}=0$ otherwise. When the convex function $J(\cdot)$ is the nuclear norm of matrix $X$, the Step 3 of Algorithm \ref{alg:linearized-Bregman} with inputs $X^k,P^k$ can be reduced to
\bea\label{nuclear-norm-min-MC-rewrite-oprator-step3} X^{k+1}:= \arg\min_{X\in\br^\mtn}\mu\|X\|_*+\half\|X-(X^k-\mu(\tau \PCal_\Omega(\PCal_\Omega X^{k}-\PCal_\Omega(M))-P^k))\|_F^2.\eea It is known (see, e.g., \cite{Cai-Candes-Shen-2008,Ma-Goldfarb-Chen-2008}) that \eqref{nuclear-norm-min-MC-rewrite-oprator-step3} has the closed-form solution,
\beaa X^{k+1} = \Shrink(X^k-\mu(\tau \PCal_\Omega(\PCal_\Omega X^{k}-\PCal_\Omega(M))-P^k),\mu), \eeaa where the matrix shrinkage operator is defined as
\beaa \Shrink(Y,\gamma):= U \Diag(\max(\sigma-\gamma,0)) V^\top, \eeaa and $Y=U\Diag(\sigma)V^\top$ is the singular value decomposition (SVD) of matrix $Y$. Thus, a typical iteration of the linearized Bregman method (Algorithm \ref{alg:linearized-Bregman}), with initial inputs $X^0=P^0=0$, for solving the matrix completion problem \eqref{nuclear-norm-min-MC-rewrite-oprator} can be summarized as
\bea\label{alg:linearized-Bregman-MC}\left\{\ba{lll}X^{k+1} & := & \Shrink(X^k-\mu(\tau \PCal_\Omega(\PCal_\Omega X^{k}-\PCal_\Omega(M))-P^k),\mu) \\
                                                    P^{k+1} & := & P^k - \tau (\PCal_\Omega X^k - \PCal_\Omega M) - (X^{k+1}-X^k)/\mu. \ea\right. \eea
Similarly, a typical iteration of the accelerated linearized Bregman method (Algorithm \ref{alg:ALB}), with initial inputs $X^0=P^0=\tilde{X}^0=\tilde{P}^0=0$, for solving the matrix completion problem \eqref{nuclear-norm-min-MC-rewrite-oprator} can be summarized as
\bea\label{alg:ALB-MC}\left\{\ba{lll} X^{k+1} & := & \Shrink(X^k-\mu(\tau \PCal_\Omega(\PCal_\Omega X^{k}-\PCal_\Omega(M))-P^k),\mu) \\
                                      P^{k+1} & := & \tilde{P}^k - \tau (\PCal_\Omega\tilde{X}^k -\PCal_\Omega M) - (X^{k+1}-\tilde{X}^k)/\mu \\
                                      \tilde{X}^{k+1} & := & \alpha_k X^{k+1} + (1-\alpha_k) X^k \\
                                      \tilde{P}^{k+1} & := & \alpha_k P^{k+1} + (1-\alpha_k) P^k, \ea\right.\eea
where the sequence $\alpha_k$ is chosen according to Theorem \ref{complexity-ALB}.

We compare the performance of LB and ALB on a variety of matrix completion problems. We created matrices $M\in\br^{n\times n}$ with rank $r$ by the following procedure. We first created standard Gaussian matrices $M_L\in\br^{n\times r}$ and $M_R\in\br^{n\times r}$ and then we set $M=M_LM_R^\top$. The locations of the $p$ known entries in $M$ were sampled uniformly, and the values of these $p$ known entries were drawn from an iid Gaussian distribution. The ratio $p/n^2$ between the number of measurements and the number of entries in the matrix is denoted by ``SR'' (sampling ratio). The ratio between the dimension of the set of $n\times n$ rank $r$ matrices, $r(2n-r)$, and the number of samples $p$, is denoted by ``FR''.
In our tests, we fixed $FR$ to 0.2 and 0.3 and $r$ to $10$. We tested five matrices with dimension $n=100,200,300,400,500$ and set the number $p$ to $r(2n-r)/FR$. The random seed for generating random matrices in MATLAB was set to $0$. $\mu$ was set to $5n$ (a heuristic argument for this choice can be found in \cite{Cai-Candes-Shen-2008}). We set the step length $\tau$ to $1/\mu$ since for matrix completion problems $\|\PCal_\Omega\|=1$. We terminated the code when the relative error between the residual and the true matrix was less than $10^{-4}$, i.e.,
\bea\label{stop-crit-MC} \|\PCal_\Omega(X^k)-\PCal_\Omega(M)\|_F / \|\PCal_\Omega(M)\|_F < 10^{-4}. \eea Note that this stopping criterion was used in \cite{Cai-Candes-Shen-2008}. We also set the maximum number of iteration to 2000.

We report the number of iterations needed by LB and ALB to reach \eqref{stop-crit-MC} in Table \ref{tab:MC-LB-ALB}. Note that performing the shrinkage operation, i.e., computing an SVD, dominates the computational cost in each iteration of LB and ALB. Thus, the per-iteration complexities of LB and ALB are almost the same and it is reasonable to compare the number of iterations needed to reach the stopping criterion. We report the relative error $err:=\|X^k-M\|_F/\|M\|_F$ between the recovered matrix $X^k$ and the true matrix $M$ in Table \ref{tab:MC-LB-ALB}. We see from Table \ref{tab:MC-LB-ALB} that ALB needed significantly fewer iterations to meet the stopping criterion \eqref{stop-crit-MC}.

In Figures \ref{fig:MC-FR0p2} and \ref{fig:MC-FR0p3}, we plot the Frobenius norms of the residuals and the relative errors obtained by LB and ALB for iteration 1-500 for the tests involving matrices with dimension $n=200, 300, 400$ and $500$. Note that the non-monotonicity of ALB is far less pronounced on these problems.

\begin{table}[ht]\label{tab:MC-LB-ALB}
\begin{center}\caption{Comparison between LB and ALB on Matrix Completion Problems}
\begin{tabular}{c||c|c|c|c|c||c|c|c|c|c}\hline
    &  \multicolumn{5}{|c||}{$FR = 0.2, \rank = 10$} & \multicolumn{5}{|c}{$FR = 0.3, \rank = 10$} \\\hline
$n$ & SR & iter-LB & err-LB & iter-ALB & err-ALB & SR & iter-LB & err-LB & iter-ALB & err-ALB \\\hline

100 & 0.95 &  85 & 1.07e-4 & 63 & 1.11e-4 & 0.63 & 294 & 1.75e-4 & 163 & 1.65e-4 \\\hline

200 & 0.49 & 283 & 1.62e-4 & 171 & 1.58e-4 & 0.33 & 1224 & 3.76e-4 & 289 & 1.83e-4 \\\hline

300 & 0.33 & 466 & 1.64e-4 & 261 & 1.60e-4 & 0.22 & 2000+ & 3.59e-3 & 406 & 1.93e-4 \\\hline

400 & 0.25 & 667 & 1.79e-4 & 324 & 1.65e-4 & 0.17 & 2000+ & 1.12e-2 & 455 & 1.80e-4 \\\hline

500 & 0.20 & 831 & 1.76e-4 & 398 & 1.65e-4 & 0.13 & 2000+ & 3.14e-2 & 1016 & 7.49e-3 \\\hline
\end{tabular}
\end{center}
\end{table}

\begin{center}
\begin{figure}
  \hspace*{-0.1in}\includegraphics[width=0.5\textwidth]{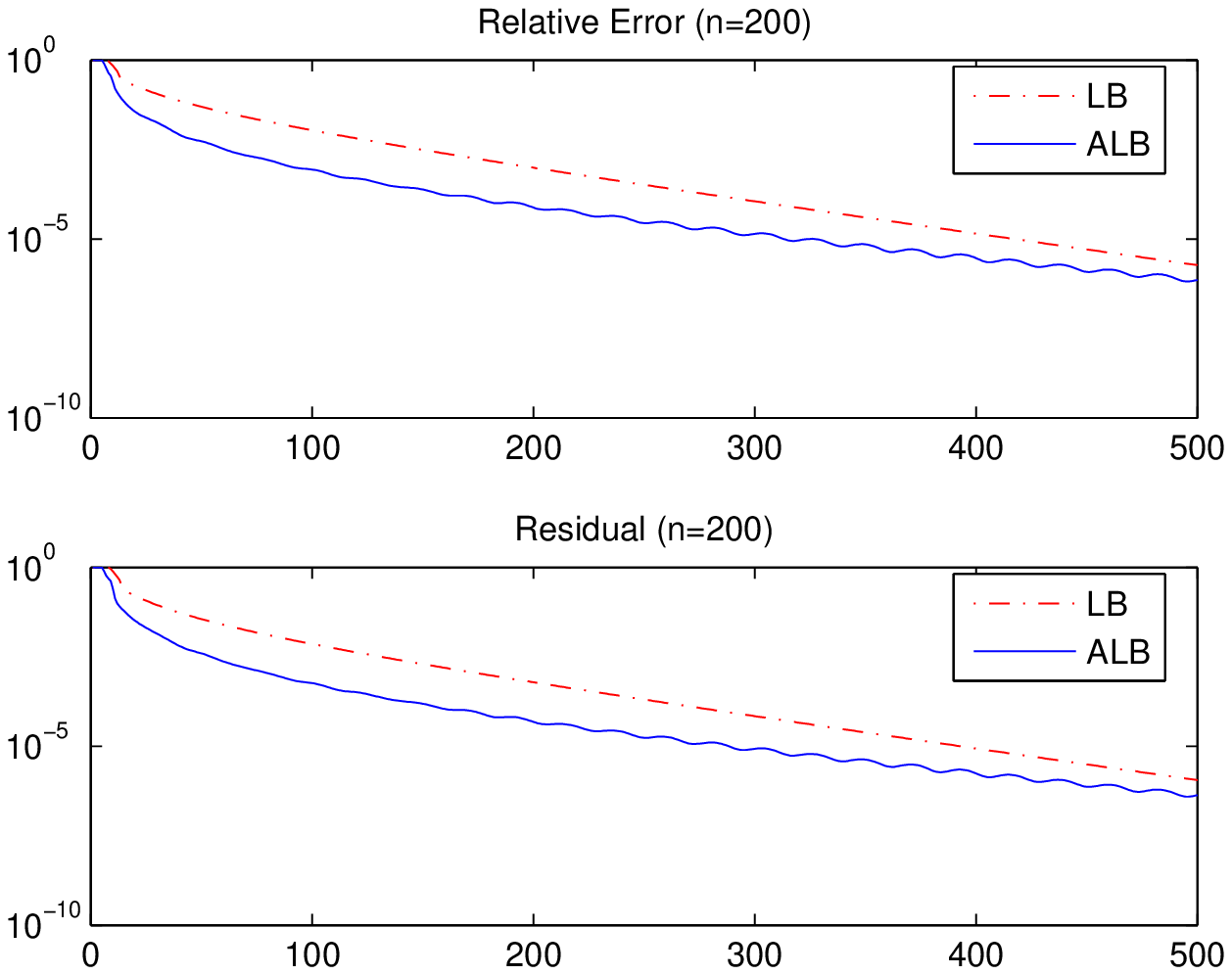}
  \hspace*{-0.1in}\includegraphics[width=0.5\textwidth]{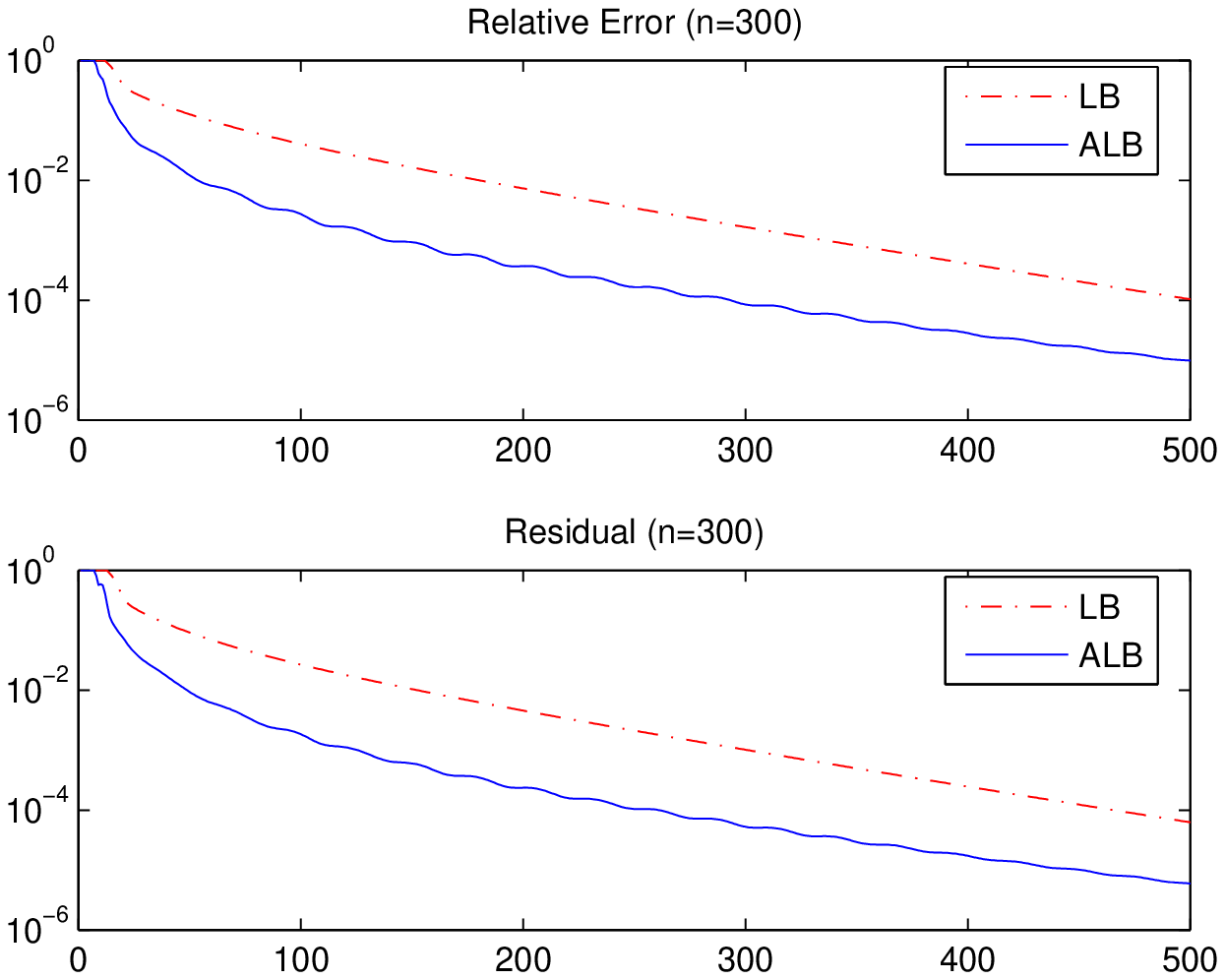}\\
  \hspace*{-0.1in}\includegraphics[width=0.5\textwidth]{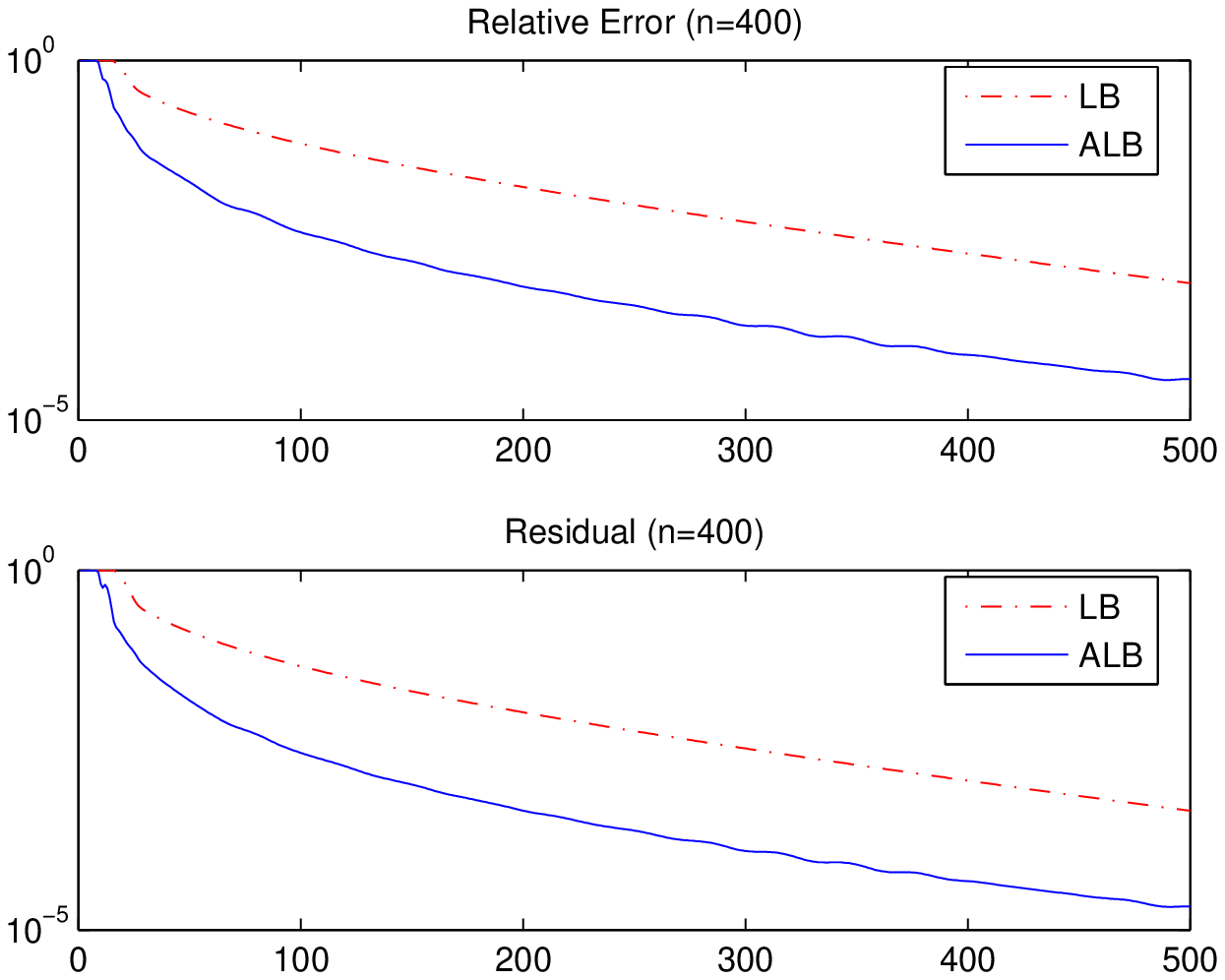}
  \hspace*{-0.1in}\includegraphics[width=0.5\textwidth]{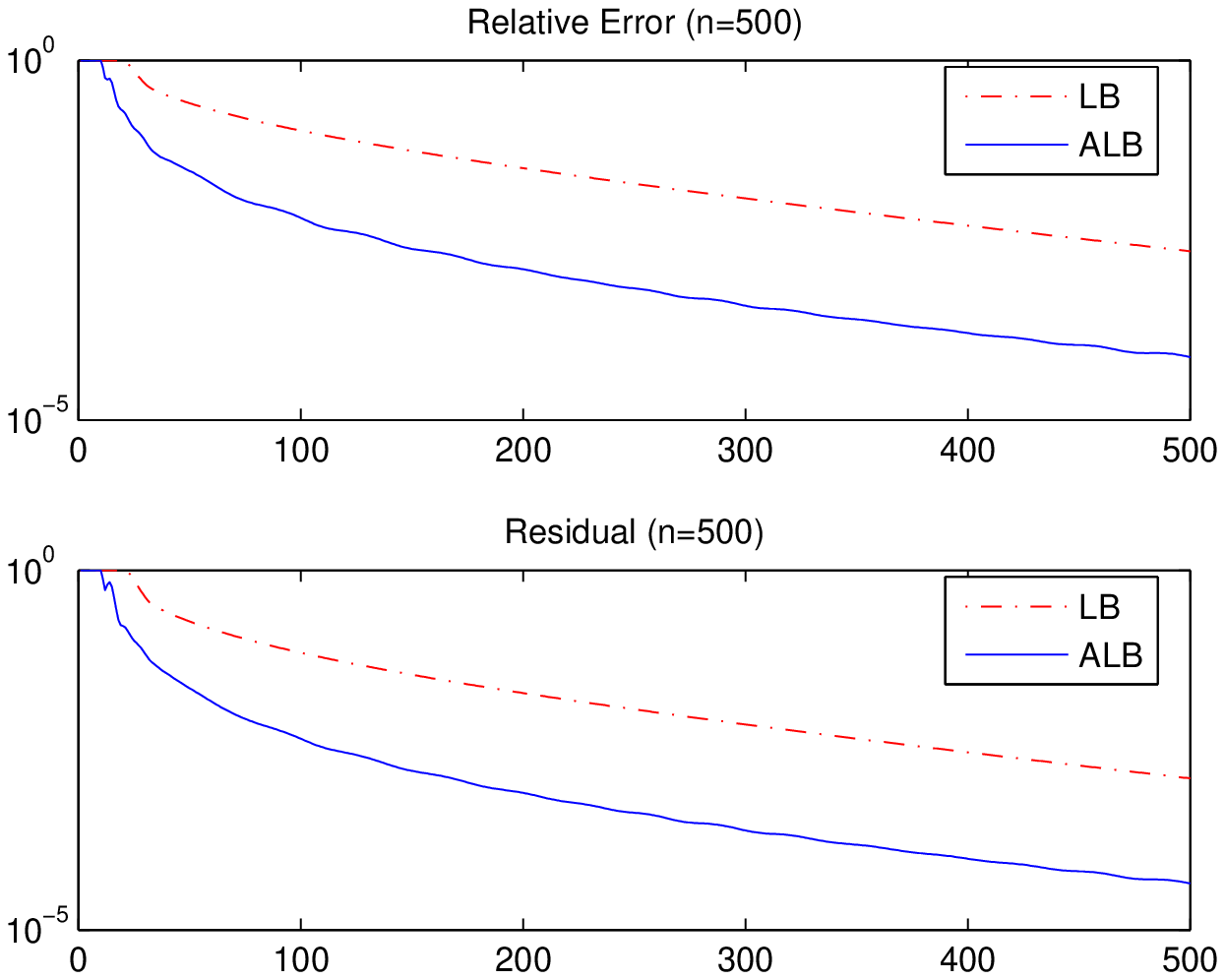}
  \caption{Comparison of LB and ALB on matrix completion problems with $\rank=10, FR=0.2$}\label{fig:MC-FR0p2}
\end{figure}
\end{center}

\begin{center}
\begin{figure}
  \hspace*{-0.1in}\includegraphics[width=0.5\textwidth]{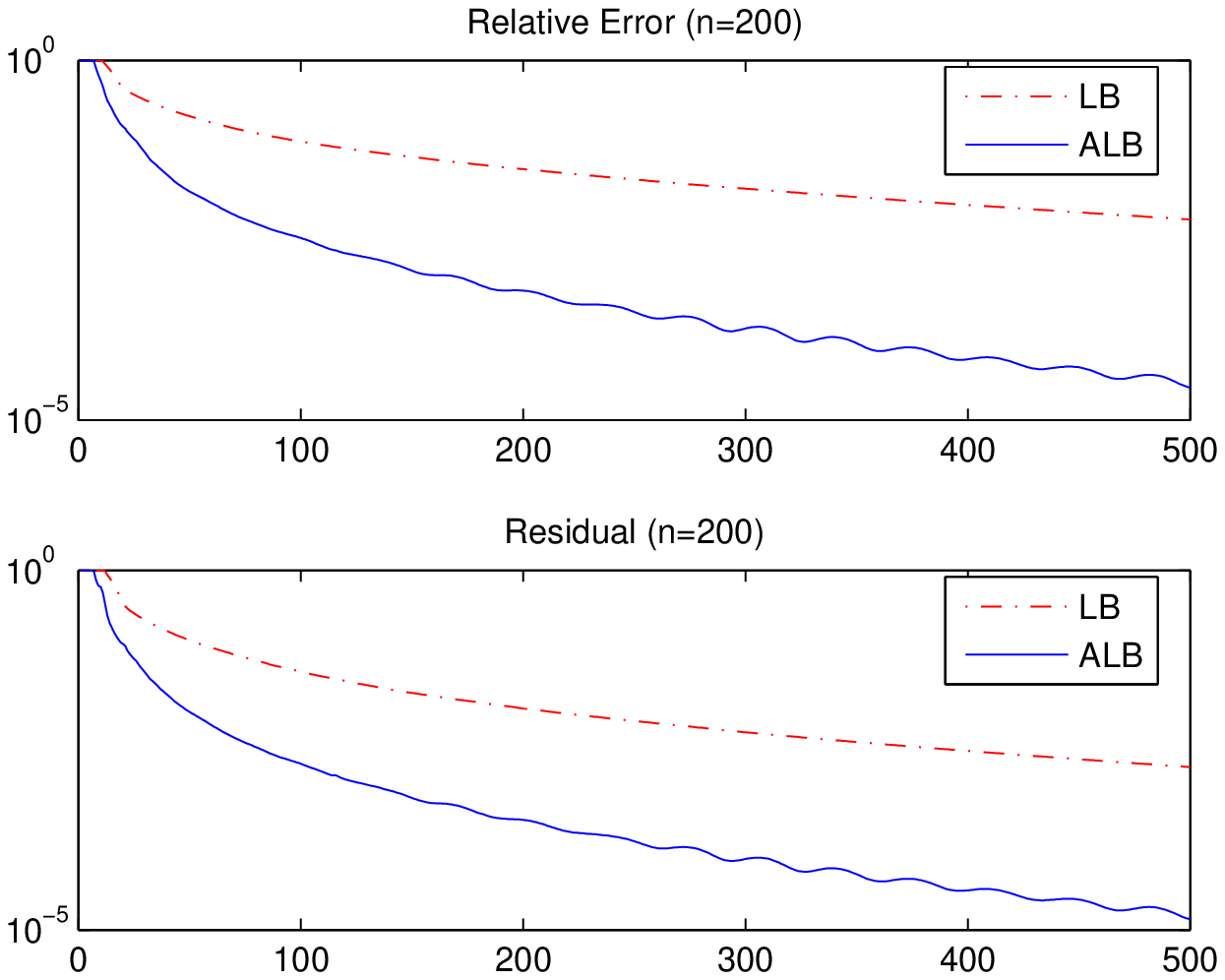}
  \hspace*{-0.1in}\includegraphics[width=0.5\textwidth]{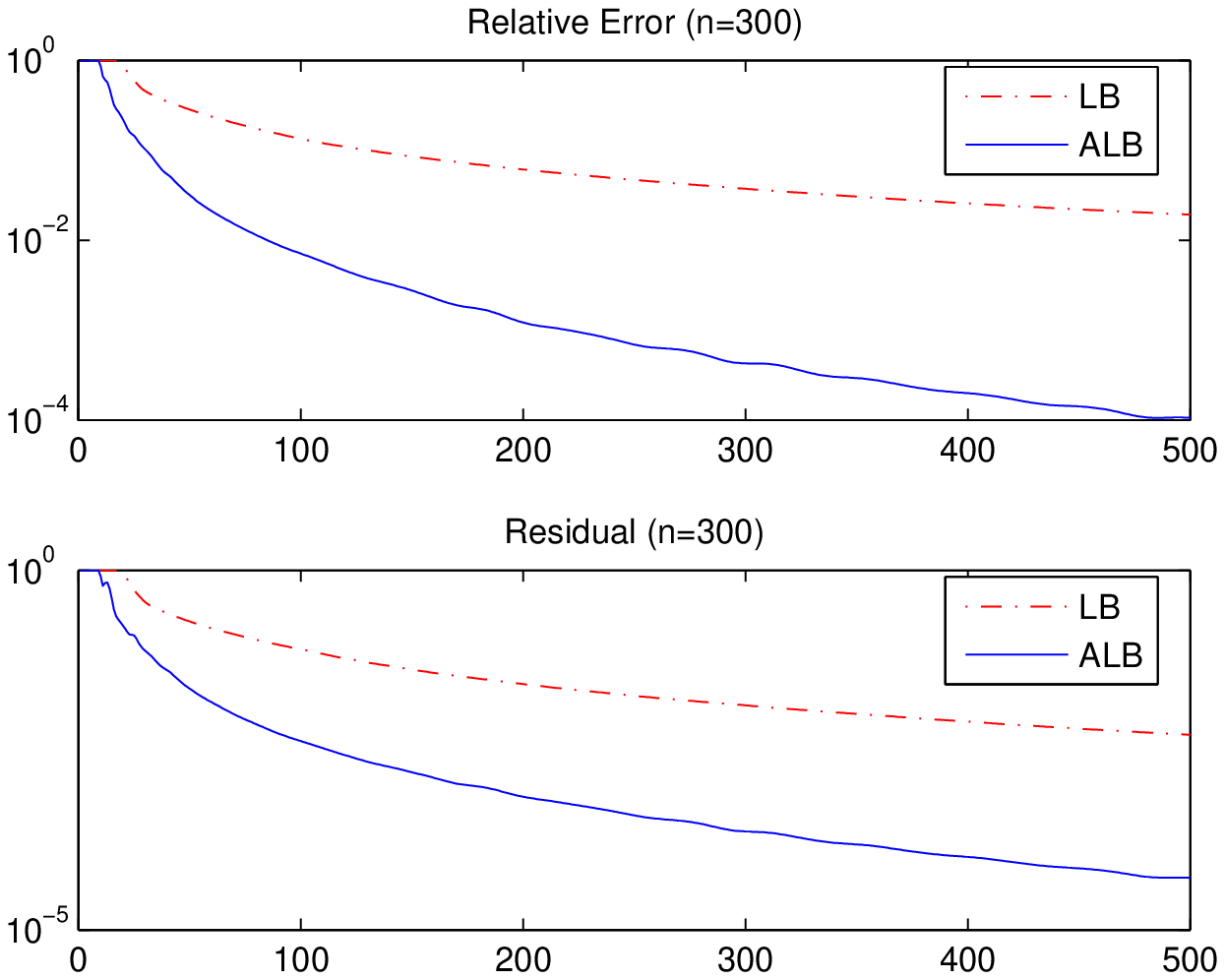}\\
  \hspace*{-0.1in}\includegraphics[width=0.5\textwidth]{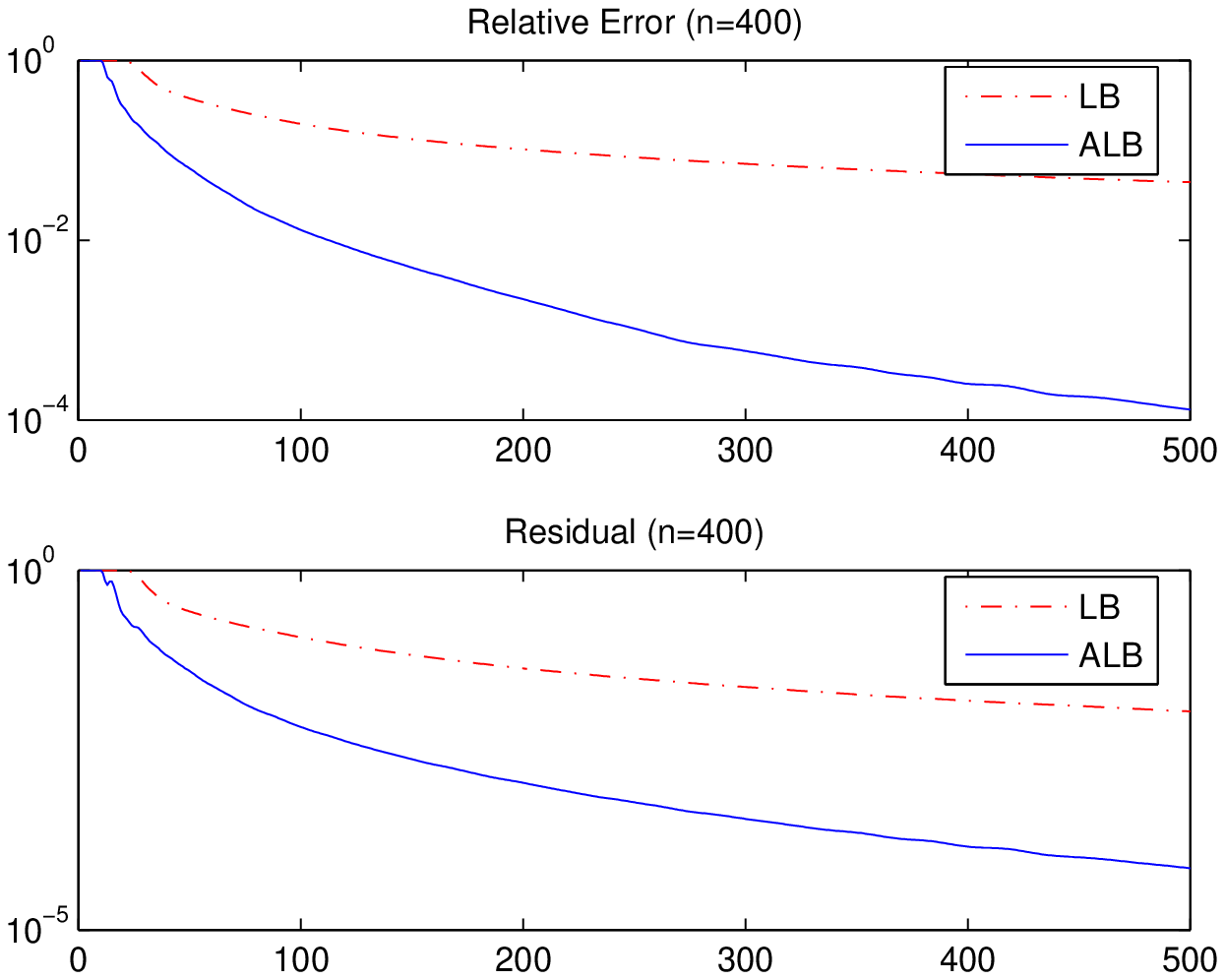}
  \hspace*{-0.1in}\includegraphics[width=0.5\textwidth]{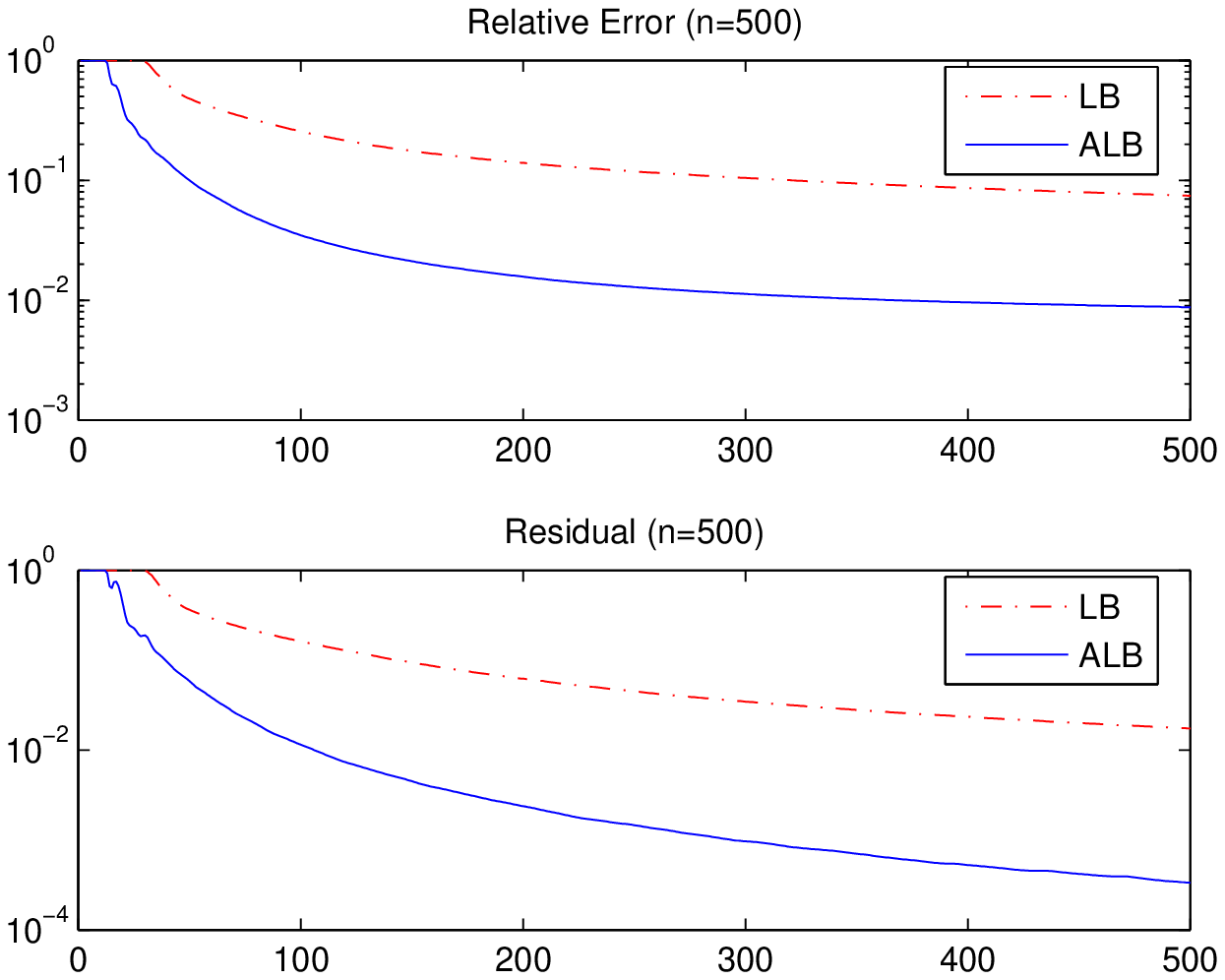}
  \caption{Comparison of LB and ALB on matrix completion problems with $\rank=10, FR=0.3$}\label{fig:MC-FR0p3}
\end{figure}
\end{center}

\section{Conclusions}\label{sec:conclusion}
In this paper, we analyzed for the first time the iteration complexity of the linearized Bregman method. Specifically, we show that for a suitably chosen step length, the method achieves a value of the Lagrangian of a quadratically regularized version of the basis pursuit problem that is within $\epsilon$ of the optimal value in $O(1/\epsilon)$ iterations. We also derive an accelerated version of the linearized Bregman method whose iteration complexity is reduced to $O(1/\sqrt{\epsilon})$ and present numerical results on basis pursuit and matrix completion problems that illustrate this speed-up.
\bibliographystyle{siam}
\bibliography{C:/Mywork/Optimization/work/reports/bibfiles/All}

\end{document}